\documentclass[11pt]{article}      
\usepackage[margin=1in]{geometry}  

\usepackage{libertine} 
\usepackage{inconsolata} 

\usepackage{microtype}
\usepackage[defaultlines=3,all]{nowidow}
\usepackage[export]{adjustbox}
\usepackage{xspace}
\usepackage{amsmath}
\usepackage{amsthm}
\usepackage{wrapfig}
\usepackage{thm-restate}
\usepackage{tikz}
\usepackage{textcomp}

\usepackage{tcolorbox}

\usepackage[font=small]{caption}
\usepackage[hidelinks]{hyperref}
\usepackage[capitalise,nameinlink]{cleveref}
\usepackage{amsfonts}
\usepackage{amssymb}
\usepackage[textsize=tiny,disable]{todonotes}
\usepackage[inline]{enumitem}
\usepackage{mathtools}
\usepackage{cases}
\usepackage[only,llbracket,rrbracket]{stmaryrd}

\usepackage{array}
\usepackage{multirow}
\usepackage{multicol}
\usepackage{mathrsfs} 
\newtheorem{theorem}{Theorem}[section]

\newtheorem{lemma}[theorem]{Lemma}
\newtheorem{claim}[theorem]{Claim}

\newtheorem{conjecture}[theorem]{Conjecture}
\newtheorem{question}[theorem]{Question}
\newtheorem{property}[theorem]{Property}
\theoremstyle{definition}
\newtheorem{definition}[theorem]{Definition}
\newtheorem{observation}[theorem]{Observation}

\newtheorem{remark}[theorem]{Remark}

\usetikzlibrary{arrows.meta}
\usetikzlibrary{shapes.geometric}
\usetikzlibrary{decorations.pathreplacing}

\usepackage{enumitem}
\setlist[itemize]{topsep=4pt,itemsep=3pt,parsep=0pt} 
\setlist[enumerate]{topsep=4pt,itemsep=3pt,parsep=0pt} 

\Crefname{figure}{Figure}{Figures}

\renewcommand{\int}{\mathrm{int}}

\newcommand{\rk}{\mathrm{rk}}

\newcommand{\N}[0]{\mathrm{\mathbb{N}}}

\renewcommand{\phi}{\varphi}

\newcommand{\Start}{\mathrm{start}}
\newcommand{\End}{\mathrm{end}}

\newcommand{\gc}[1]{\mathcal{#1}}

\newcommand{\CC}{\mathscr{C}}

\renewcommand{\le}{\leqslant}
\renewcommand{\leq}{\le}
\renewcommand{\ge}{\geqslant}
\renewcommand{\geq}{\ge}
\newcommand{\KK}{\mathcal{K}}

\newenvironment{claimproof}[1][\proofname]{%
  \begin{proof}[#1]%
}{%
  \end{proof}%
}

\newcommand{\from}{\colon}

\begin{document}

\newcommand{\funding}{JD and SzT received funding from the European Research Council (ERC) with grant agreement No.\ 101126229 -- {\sc BUKA}.\\
NM received funding from the European Union through an ERA Fellowship with grant agreement No.\ 101334340 -- {\sc LoCoMoDe}.\\[0.5em]
\hspace*{\fill}\includegraphics[width=40px]{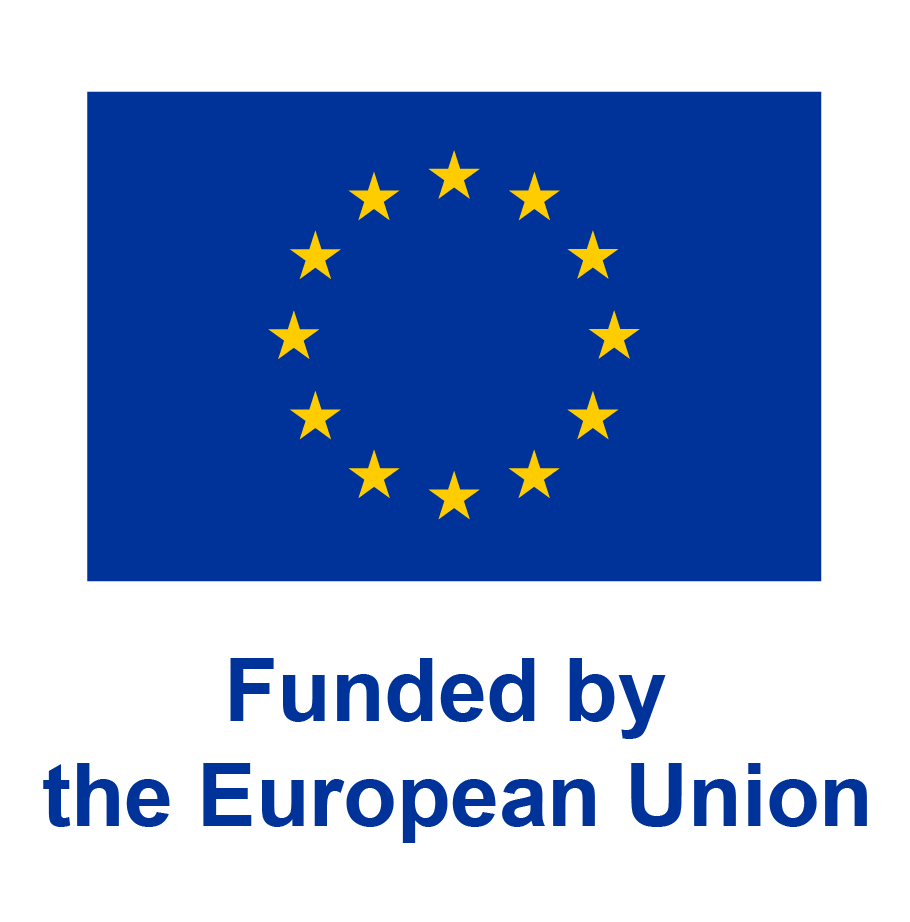}\hspace{1em}\includegraphics[width=40px]{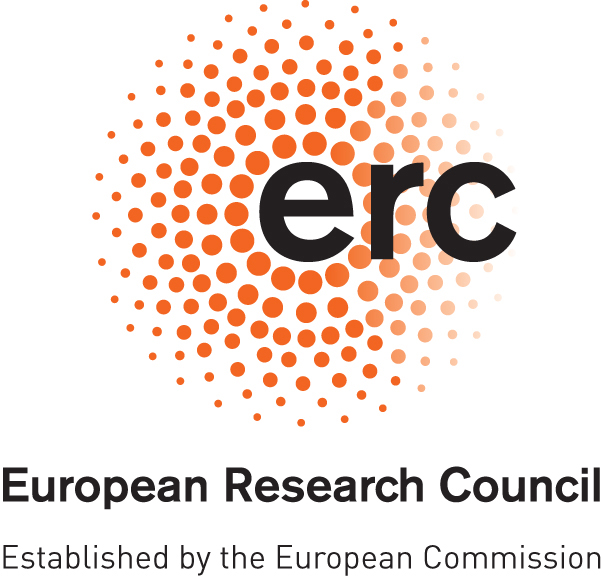}
}

\title{Hereditary $2$-WQO Graph Classes Have Bounded Clique-Width\footnote{\funding}
}
\date{\today}
\author{
  Julien Duron \\
  \small{University of Warsaw} \\
  \small{\texttt{j.duron@uw.edu.pl}}
  \and
  Nikolas M\"ahlmann \\
  \small{University of Warsaw} \\
  \small{\texttt{maehlmann@mimuw.edu.pl}}
  \and
  Szymon Toruńczyk\\
  \small{University of Warsaw} \\
  \small{\texttt{szymtor@mimuw.edu.pl}}
}
\maketitle
\begin{abstract}
A graph class is $k$-WQO if its $k$-labeled graphs are well-quasi-ordered under label-preserving induced subgraph embeddings.
We show that every hereditary graph class that is $2$-WQO has bounded clique-width.
Combined with the recent result of Dumas and Lopez, this confirms a long-standing conjecture of Pouzet:
A hereditary graph class is $2$-WQO if and only if it is $k$-WQO for all $k\geq 2$, if and only if it is $\forall$-WQO, that is, its labeled graphs are well-quasi-ordered for  every possible well-quasi-ordered label set.

Our proof builds on a recent structure/non-structure dichotomy for the model theoretic notion of monadic dependence by  Dreier, Mählmann, and Toruńczyk.
Through the non-structure characterization by forbidden induced subgraphs, we show that every hereditary $2$-WQO graph class is monadically dependent.
Leveraging the Ramsey-theoretic structural properties provided by monadic dependence, we then establish bounded clique-width by ruling out the existence of large well-linked sets, which are the canonical obstructions for clique-width.  
\end{abstract}

\noindent
\textbf{Acknowledgements.}
We thank Pierre Simon for suggesting the connection of Pouzet's conjecture to monadic dependence.

\section{Introduction}
A graph class $\CC$ is \emph{well-quasi-ordered} (WQO) if it contains no infinite antichain under the induced subgraph (embedding) relation. Further, if $k$ is a positive integer,
we say  that $\CC$ is \emph{$k$-WQO} if 
the class of all vertex-labelled graphs from~$\CC$ with labels in $[k]$ is well-quasi-ordered under label-preserving embeddings. 

For example, the class of cycles is not WQO, while the class of paths is WQO, but it is not $2$-WQO, since labelling the two endpoints with a distinguishing label creates an infinite antichain. 
On the other hand,
the class of linear orders (viewed as directed graphs) is $k$-WQO for all $k\ge 1$ by Higman's lemma, and
so is the class of  cographs  \cite{Damaschke1990} (see also \cite[Thm. 1]{AtminasL15}).

The notion of $k$-WQO was studied by Pouzet around 1972, who observed that already $2$-WQO is a very strong assumption and asked
 whether for classes of relational structures which are \emph{hereditary} (that is, closed under taking induced substructures)  $2$-WQO 
implies $k$-WQO for all $k \ge 1$.
This is a rephrasing of a question posed in \cite[\textsection 2.4]{pouzet1972}, which has been repeated by
Fra\"iss\'e in \cite[Chap. 12, \textsection 3.1]{Fraisse1986TheoryRelations}. 
In the setting of graphs, this has been repeated by Daligault, Rao, and Thomass\'e~\cite{daligault2010} in the modern formulation, as follows:

\begin{conjecture}[{\cite{pouzet1972}, \cite[Conj. 1]{daligault2010}}]\label{conj:pouzet}
For hereditary graph classes, $2$-WQO implies $k$-WQO for all $k\ge 1$.
\end{conjecture}
In fact, Pouzet \cite[Problems 9 (1)]{Pouzet2024WellquasiorderingAE} conjectured that for hereditary classes, $2$-WQO even implies \emph{$\forall$-WQO}\footnote{The terminology $\forall$-WQO is from \cite{dumaslopez2026}; it is sometimes called WQO-WQO, hereditary WQO, or labelled WQO.}, which means that the class remains WQO under  embeddings which preserve or increase the labels, when the vertex labels are drawn from an arbitrary fixed well-quasi-order
(another variant of this conjecture appears in \cite[Problem~3]{POUZET2024103813}).

 Daligault, Rao, and Thomass\'e~\cite[Conj. 5]{daligault2010} proposed a route towards Pouzet's conjecture, by posing the following structural conjecture:

\begin{conjecture}[\cite{daligault2010}]
Every hereditary $2$-WQO graph class has bounded clique-width.
\end{conjecture}

Clique-width is one of the central width parameters for hereditary graph classes. It can be seen as an extension of tree-width to dense graphs. Roughly, graphs of bounded clique-width can be encoded in trees in a simple fashion (namely, by a fixed MSO formula). 
Thus, the conjecture of Daligault, Rao, and Thomass\'e gives an unexpected connection between a purely order-theoretic assumption and a strong structural tameness property of hereditary graph classes. This is reminiscent to the connection of minor-closed classes---which are famously well-quasi ordered by the minor relation---and tree-width discovered by Robertson and Seymour.

Thanks to the very restricted structure of graphs of bounded clique-width, and their close relationship to trees, Dumas and Lopez \cite[Thm. 4]{dumaslopez2026} were able to confirm
Pouzet's conjecture for all graph classes of bounded clique-width, using methods from automata theory.
Thus, to confirm Pouzet's conjecture for hereditary graph classes, it remains to establish the conjecture of Daligault, Rao, and Thomass\'e.

Our main result does exactly this.

\begin{theorem}\label{thm:main}
Every hereditary $2$-WQO graph class has bounded clique-width.
\end{theorem}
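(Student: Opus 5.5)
The proof has two stages, following the outline in the abstract. Throughout, $\CC$ is a hereditary $2$-WQO class. First I show that $\CC$ is monadically dependent. Then I use the structure theory of monadically dependent classes to rule out large well-linked sets. Since well-linked sets are the canonical obstruction to bounded clique-width, this gives the theorem.

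\textbf{Step 1: monadic dependence.} I would use the forbidden induced subgraph characterization of Dreier, M\"ahlmann and Toru\'nczyk. If $\CC$ is not monadically dependent, then for every $r$ it contains, up to a bounded number of flips, an $r$-subdivided or layered transformation of arbitrarily large biclique/half-graph/comparability grids or star crossing patterns. From such a pattern I would extract an infinite antichain of $2$-labelled graphs. The template is the paths example from the introduction. I would take long "paths" in the pattern, or cycles built from crossing paths, and label their two endpoints, or a distinguished gadget at each end, with label~$2$. An embedding between two such labelled graphs must then map endpoints to endpoints. Because the patterns are rigid, it must therefore map the whole path-like structure onto an equally long one. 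This rules out embeddings between graphs of different lengths. The hard part is handling the flips: a flip destroys the literal path structure, so I would need to choose the labelled substructures so that the labels also encode the flip classes. Two labels barely suffice for this, perhaps by encoding the boundedly many flip classes with short labelled gadgets that are recognisable under embeddings.

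\textbf{Step 2: no large well-linked sets.} Suppose $\CC$ has unbounded clique-width. Then graphs in $\CC$ contain arbitrarily large sets $W$ that are well-linked with respect to cut-rank. In a monadically dependent class, Ramsey-type arguments (flip-flatness, or the insulator/indiscernible-sequence structure from the same dichotomy paper) let me pass to a large subset of $W$. On this subset, after boundedly many flips, there are many pairwise "connected" pairs routed through vertex-disjoint insulated paths of bounded complexity. Well-linkedness forces many such connections, and bounded-length connections in a monadically dependent class cannot be too dense, so these connections must form long induced path-like chains. I would then aim to extract, inside graphs of $\CC$, long induced paths or "ladders" between two distinguished vertex sets, with unbounded lengths. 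Labelling their ends with label~$2$ then yields an infinite antichain, contradicting $2$-WQO. The well-linked set supplies the long linkages, and monadic dependence makes them canonical enough for rigid embeddings.

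\textbf{Expected main obstacle.} Step 2 is the main difficulty: converting a well-linked set into an infinite family of pairwise non-embeddable $2$-labelled graphs. I would first try to prove a lemma that large well-linked sets in monadically dependent hereditary classes yield, for every $n$, a labelled "long chain" structure of length exactly about $n$ whose induced copies of shorter chains cannot reach both labelled ends. This would mimic the path/cycle antichain argument under a bounded number of flips.
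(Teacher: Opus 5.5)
\textbf{Step 2 has a genuine gap.} It states a hope rather than an argument. The decisive sentence ("well-linkedness forces many such connections, and bounded-length connections in a monadically dependent class cannot be too dense, so these connections must form long induced path-like chains") has no supporting lemma. Cut-rank well-linkedness has no Menger-type duality that yields disjoint induced connections, and nothing in the insulator or flip-flatness theory turns a well-linked set into rigid long chains whose end-labelled copies form an antichain. If your chains are rigid layered structures of the kind that do give $2$-labelled antichains (the paper's Brignall--Cocks patterns), and you find them using only monadic dependence, you would be showing that pattern-freeness alone bounds rank-width. The paper leaves exactly that question open.

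The paper never builds an antichain from a well-linked set. It uses $2$-WQO a second, much cheaper way (Lemma~\ref{lem:wl-bounded}). Label $W$ by $1$; then $2$-WQO bounds the size of minimal pairs $(G,W)$ with $W$ well-linked. So every well-linked set of size $k$ is already well-linked in an induced subgraph $H$ with $|V(H)|\le f(k)$.

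Separately, it proves a Separator Lemma from insulators. In an $(s,h_0)$-pattern-free class, take an insulator of height $h_0+1$. Each column with a non-empty top cell yields a downward path, and many such columns pigeonhole (on colour sequences) into an $(s,h_0)$-pattern. Hence all but boundedly many top cells are empty. A run of consecutive empty-top columns then gives a partition $P_1,\dots,P_m$, each part meeting the given set. Every middle block is fully insulated, so the cut-rank is at most $2^{qh}$.

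To finish, spread a large well-linked $W$ over such a partition. Take $W_1,W_2$ of size $k+1$ in the extreme parts and a bounded witness $H$ for $W_1\cup W_2$. Some middle part misses $V(H)$, which gives a cut of $H$ of rank at most $k$ between $W_1$ and $W_2$, a contradiction. The bounded-witness lemma and the low-rank separator structure are the ideas your plan is missing. Without them your route needs a new structure theorem.

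\textbf{Step 1 also lacks the key idea.} Spending the two labels to encode flip classes via gadgets is not workable as described, and it is unnecessary. The paper passes through a flip-robust intermediate obstruction, the $(m,r)$-patterns. In these, layers are cliques or independent sets, non-consecutive layers are fully adjacent or non-adjacent, and consecutive layers are governed by one of $=,\neq,<,\le,>,\ge$.

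A $k$-flip of a $(k^r m,r)$-pattern contains an $(m,r)$-pattern: pigeonholing columns by colour sequence makes the flip partition coarsen the layers, so the flip only complements relations. The antichain argument never sees the flip. Label the first and last layers; vertices mixed to the image of a layer must land in an adjacent layer, so an embedding walks layer by layer, and parity prevents it from reaching an end layer.

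All four obstructions of the Dreier--M\"ahlmann--Toru\'nczyk characterization contain arbitrarily large patterns. This gives $2$-WQO $\Rightarrow$ pattern-free $\Rightarrow$ monadically dependent. Pattern-freeness, not merely monadic dependence, is then exactly what empties the insulator top cells in the second half.
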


Combined with the result of Dumas and Lopez~\cite{dumaslopez2026}, this proves the following.

\begin{theorem}\label{thm:pouzet}
    The following conditions are equivalent for hereditary graph classes:
    \begin{center}
    \begin{enumerate*}
        \item\label{it:2wqo} $2$-WQO,
        \qquad\qquad\qquad
        \item $k$-WQO for all $k\ge 1$,
        \qquad\qquad\qquad
        \item $\forall$-WQO.
    \end{enumerate*}
    \end{center}
\end{theorem}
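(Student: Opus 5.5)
The equivalence follows by chaining \cref{thm:main} with the theorem of Dumas and Lopez~\cite[Thm. 4]{dumaslopez2026}, plus two trivial implications. I will establish the cycle (1)$\Rightarrow$(3)$\Rightarrow$(2)$\Rightarrow$(1).

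\emph{(3)$\Rightarrow$(2).} Fix $k\ge 1$ and take the label set $[k]$ ordered as an antichain, that is, with equality as the quasi-order. This is a finite quasi-order and hence a well-quasi-order. Under this order, embeddings that ``preserve or increase'' labels are exactly the label-preserving embeddings. So $\forall$-WQO, specialised to this label set, gives $k$-WQO.

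\emph{(2)$\Rightarrow$(1).} This is immediate by taking $k=2$.

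\emph{(1)$\Rightarrow$(3).} Let $\CC$ be hereditary and $2$-WQO. By \cref{thm:main}, $\CC$ has bounded clique-width. The result of Dumas and Lopez confirms Pouzet's conjecture for hereditary classes of bounded clique-width: such a class is $2$-WQO if and only if it is $\forall$-WQO. Applying this to $\CC$, which is hereditary, of bounded clique-width, and $2$-WQO, shows that $\CC$ is $\forall$-WQO.

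The only substantive step is \cref{thm:main}. The remaining work is bookkeeping: confirming that the hypotheses of \cite[Thm. 4]{dumaslopez2026} are exactly hereditary, bounded clique-width, and $2$-WQO, and that their notion of $\forall$-WQO uses the same order convention (labels preserved or increased along an arbitrary WQO). If their theorem only stated the conclusion $k$-WQO, the implication (1)$\Rightarrow$(2) would still follow in the same way, and (3) would then need their stronger form.
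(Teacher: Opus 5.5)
Your proposal is correct and matches the paper's argument. The only substantive implication, (1)$\Rightarrow$(3), is obtained exactly as in the paper by combining \Cref{thm:main} with the Dumas--Lopez resolution of Pouzet's conjecture for hereditary classes of bounded clique-width. The implications (3)$\Rightarrow$(2)$\Rightarrow$(1) are the trivial specialisations (take the antichain label set $[k]$, then $k=2$), which you spell out and the paper leaves implicit.
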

This confirms two conjectures of Pouzet, in the setting of graphs: \Cref{conj:pouzet} 
and the conjecture about $\forall$-WQO \cite[Problems 9 (1)]{Pouzet2024WellquasiorderingAE}.
 Also this implies that hereditary $2$-WQO graph classes have exponential \emph{speed}---contain at most $2^{O(n)}$ graphs with $n$ vertices up to isomorphism---answering a question of Pouzet and Sobrani (2003), see \cite[Problem 5]{POUZET2024103813}. This follows from \Cref{thm:main}, since classes of bounded clique-width are well-known to have at most exponential speed~\cite{Allen_Lozin_Rao_2009}.

 Note that by \cite{LOZIN20181},
 there are hereditary WQO graph classes with unbounded clique-width, so the assumption of $2$-WQO is indeed necessary for our result\footnote{We don't know whether the heredity assumption  can be dropped in this statement. If \Cref{it:2wqo} is replaced by $3$-WQO then heredity can be dropped, since removing vertices can be simulated by one extra color.}.\todo{Reviewer A: What about non-hereditary classes that are $2$-well-quasi-ordered? Is
  there something to be said?}

\medskip
To prove Theorem~\ref{thm:main}, we 
work with 
a family of forbidden induced configurations originally introduced by Brignall and Cocks \cite{Brignall-Cocks-Framework}, which we call \emph{Brignall-Cocks patterns}.
These are a generalization of the path-like structures that were recently used by Mählmann~\cite{mahlmann2025forbidden} to characterize graph classes of unbounded shrub-depth.
Every hereditary $2$-WQO class is \emph{pattern-free}:
it cannot contain such patterns of arbitrarily large size, since otherwise,  
 a labelled antichain can be exhibited easily. This is the first step of the proof; see \Cref{fig:overview} for a diagrammatic overview of the~proof.

As a next step, we connect pattern-freeness to a general model-theoretic notion of tameness, namely \emph{monadic dependence}, introduced by Shelah \cite{shelah1986monadic}. Monadically dependent graph classes vastly extend classes of bounded clique-width.
They have recently been characterized 
 in terms of forbidden obstructions by Dreier, M\"ahlmann, and Toru\'nczyk~\cite{dreier2024flipbreakability}.
 We observe that each of the forbidden obstructions to monadic dependence contains arbitrarily large Brignall-Cocks patterns. It follows that every pattern-free graph class is monadically dependent.

The final step is to prove that monadically dependent, $2$-WQO classes have bounded clique-width. The \emph{insulation property} developed in~\cite{dreier2024flipbreakability} provides a Ramsey-theoretic notion of structure in monadically dependent graphs.
Combining the insulation property with $2$-WQO, we are able to distribute every sufficiently large vertex set across a long linear partition whose intermediate blocks act as separators of low cut-rank. 
This lets us exclude the existence of large \emph{well-linked sets}, which are obstructions characterizing bounded rank-width by the result of Oum and Seymour~\cite{oum2006}.
Since bounded rank-width is equivalent to bounded clique-width, this completes the proof of \Cref{thm:main}.

To summarize, our proof proceeds by first observing that every hereditary 2-WQO graph class $\CC$ is monadically dependent using the characterization of the latter via forbidden obstructions, and then by leveraging the structure of monadically dependent classes, in combination with the obstructions to bounded rank-width, to prove that $\CC$ furthermore has bounded rank-width (and thus clique-width). 
Conceptually, this connects three lines of work that had so far evolved largely in parallel: labelled well-quasi-ordering, structural width measures such as rank-width and clique-width, and the recent model-theoretic structure theory of monadically dependent graph classes. In particular, it shows that a seemingly weak labelled order-theoretic assumption already forces one of the central structural tameness properties of hereditary graph classes.

\begin{figure}[ht]
    \centering
\resizebox{\linewidth}{!}{
\begin{tikzpicture}[
  >=Latex,
  node distance=8mm and 11mm,
  box/.style={
    draw,
    align=center,
    inner sep=4pt,
    minimum height=16mm,
    text width=26mm
  }
]

\node[box] (wqo) {2-WQO};
\node[box, right=of wqo] (pat) {pattern-free};
\node[box, right=of pat] (md) {monadically\\dependent};
\node[box, right=of md] (ins) {bounded-height\\insulation property};
\node[box, right=of ins] (part) {separator\\lemma};
\node[box, right=of part] (rw) {bounded\\rank-width};

\draw[->, thick] (wqo) -- node[above] {Sec. \ref{sec:patterns}} (pat);
\draw[->, thick] (pat) -- node[above] {Sec. \ref{sec:mdep}} (md);
\draw[->, thick] (md) -- node[above] {\ref{subsec:bounded-height-insulators}} (ins);
\draw[->, thick] (ins) -- node[above] {\ref{subsec:wrapup}} (part);
\draw[->, thick] (part) -- node[above] {Sec. \ref{sec:final}} (rw);

\draw[->, thick]
  (pat.south) -- ++(0,-6mm)
  -| node[pos=0.25, above] {\ref{subsec:bounded-height-insulators}} ([xshift=-4mm]ins.west)
  -- (ins.west);

\draw[->, thick]
  (wqo.south) -- ++(0,-12mm)
  -| node[pos=0.25, above] {Lemma \ref{lem:wl-bounded}} ([xshift=-4mm]rw.west)
  -- (rw.west);

\end{tikzpicture}
}
\caption{Proof overview. The edge labels refer to corresponding sections or lemmas.}\label{fig:overview}
\end{figure}
We close this introduction by a brief discussion.
Observe that the 2-WQO property is used twice in our proof. 
We don't know whether pattern-freeness alone implies bounded rank-width. 

\begin{question}
Does every pattern-free class have bounded rank-width?
\end{question}

We also ask whether something stronger holds:
\begin{question}\label{question}
  Is every pattern-free class 2-WQO?
\end{question}
A positive answer to this question, together with \Cref{lem:2wqo-pattern-free},  would imply that for hereditary classes, 2-WQO is equivalent to pattern-freeness. However, Robert Brignall has answered \Cref{question} negatively~\cite{Brignall-example}. Thus, characterizing 2-WQO hereditary classes in terms of forbidden induced subgraphs remains an open question.

\section{Preliminaries}

\paragraph{Graphs.} We consider finite, simple, undirected graphs. 
A graph class is \emph{hereditary} if it is closed under taking induced subgraphs.
\paragraph{Well-quasi orders.}
A \emph{quasi-order} is a reflexive and transitive binary relation $\le$ on some domain.
A \emph{well quasi-order} (\emph{WQO}) is a quasi-order $\le$ such that 
 for every infinite sequence $q_1, q_2, \ldots$ of elements from the domain,
there exist $i < j$ such that $q_i \le q_j$. Equivalently, 
there are no infinite antichains and no infinite descending sequences.

For a positive integer $k$,
a graph class $\CC$ is \emph{$k$-WQO} if the class of $k$-labeled graphs from $\CC$
is WQO under label-preserving induced subgraph embeddings.
 More precisely, $\CC$ is {$k$-WQO} if for every infinite sequence
$(G_1, \ell_1), (G_2, \ell_2), \ldots$ where each $G_i \in \CC$
and $\ell_i \from V(G_i) \to [k]$ is a vertex-labeling,
there exist $i < j$ and an injective $\phi \from V(G_i) \to V(G_j)$ such that
\begin{itemize}
    \item $uv \in E(G_i) \Leftrightarrow \phi(u)\phi(v) \in E(G_j)$ for all $u, v \in V(G_i)$, and
    \item $\ell_j(\phi(v)) = \ell_i(v)$ for all $v \in V(G_i)$.
\end{itemize}


\paragraph{Flips.}
Fix a graph $G$ and a partition $\KK$ of its vertices.
For every vertex $v \in V(G)$ we denote by $\KK(v)$ the part of $\KK$ which contains $v$.
Let $F \subseteq \KK^2$ be a symmetric relation.
The \emph{flip} $G \oplus F$ of $G$ is defined as the  graph with vertex set $V(G)$,
and edges defined by the following condition, for distinct $u,v\in V(G)$:
\[
    uv \in E(G \oplus F) \Leftrightarrow \begin{cases}
        uv \notin E(G) & \text{if } (\KK(u), \KK(v)) \in F,\\
        uv \in E(G) & \text{otherwise.}
    \end{cases}
\]

We call $G \oplus F$ a \emph{$\KK$-flip} of $G$.
If $\KK$ has at most $k$ parts, we call $G \oplus F$ a \emph{$k$-flip} of $G$.

\paragraph{Rank-width.}
Instead of bounding the clique-width of hereditary $2$-WQO classes, we will bound their \emph{rank-width}, a parameter that is functionally equivalent to clique-width:

\begin{theorem}[{\cite[Prop.\ 6.3]{oum2006}}]\label{fact:rw}
    A graph class has bounded clique-width if and only if it has bounded rank-width.
\end{theorem}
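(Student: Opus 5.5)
We prove the two directions separately, each with an explicit bound: $\mathrm{rw}(G)\le \mathrm{cw}(G)$ and $\mathrm{cw}(G)\le 2^{\mathrm{rw}(G)+1}$. Both bounds are established by translating one kind of decomposition directly into the other.

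\emph{Clique-width bounds rank-width.} Take a $k$-expression for $G$ and consider its parse tree. Contracting the unary nodes (relabellings and edge-additions) leaves a binary tree whose leaves are the vertices of $G$. Suppressing degree-$2$ nodes and joining the two subtrees at the root turns it into a subcubic tree, i.e.\ a rank decomposition. The key observation concerns a node $t$ with vertex set $X_t$. After the subexpression at $t$ is evaluated, every later operation treats all vertices carrying the same label uniformly. Hence two vertices of $X_t$ with the same label at $t$ have identical neighbourhoods in $V(G)\setminus X_t$. So the $X_t\times (V(G)\setminus X_t)$ adjacency matrix over $\mathrm{GF}(2)$ has at most $k$ distinct rows, and its rank, the cut-rank of $X_t$, is at most $k$. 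Every edge of the decomposition tree separates some $X_t$ from its complement, so the width is at most $k$.

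\emph{Rank-width bounds clique-width.} Take a rank decomposition of width $k$ and root it by subdividing an edge. For each node $t$ with leaf set $X_t$, call $u,v\in X_t$ \emph{equivalent} if $N(u)\setminus X_t=N(v)\setminus X_t$. These neighbourhoods are rows of a $\mathrm{GF}(2)$ matrix of rank at most $k$, so they lie in a space of dimension at most $k$. Hence there are at most $2^k$ classes at each node. We build, bottom-up, an expression for $G[X_t]$ in which each vertex carries a label from $[2^k]$ encoding its class at $t$.

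At an inner node $t$ with children $a,b$, we proceed in four steps.
\begin{enumerate*}
\item Shift the labels of the $b$-expression to a disjoint copy of $[2^k]$, using $2^{k+1}$ labels in total.
\item Take the disjoint union.
\item For each pair consisting of a class $C$ at $a$ and a class $D$ at $b$, add all edges between labels $C$ and $D$ if $C$ and $D$ are adjacent.
\item Relabel to the classes at $t$.
\end{enumerate*}

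Step 3 is well defined because $X_b\subseteq V(G)\setminus X_a$, so adjacency between $x\in X_a$ and $y\in X_b$ depends only on the class of $x$ at $a$. Symmetrically it depends only on the class of $y$ at $b$, so the bipartite adjacency between $X_a$ and $X_b$ is constant on each pair of classes. Step 4 is well defined because $V(G)\setminus X_t\subseteq V(G)\setminus X_a$: two vertices equivalent at $a$ remain equivalent at $t$, so the classes at $t$ coarsen the classes of $a$ (and likewise of $b$). At the root, the two subtrees partition $V(G)$ and are combined in the same way. This yields a $2^{k+1}$-expression for $G$.

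The main point requiring care is the well-definedness of the edge-addition and relabelling steps, i.e.\ that both depend only on the classes. With the inclusions noted above this is immediate. Applying both bounds to every graph in a class gives the stated equivalence.
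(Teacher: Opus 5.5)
Your proof is correct. It is essentially the standard argument behind this result. The paper gives no proof of its own here; it only cites Oum and Seymour's Proposition~6.3, which establishes $\mathrm{rw}(G)\le \mathrm{cw}(G)\le 2^{\mathrm{rw}(G)+1}-1$ via the same two translations you use. The first turns the parse tree of a $k$-expression into a rank decomposition, using that equally labelled vertices have the same neighbourhood outside $X_t$. The second turns a rooted rank decomposition into an expression whose labels are the at most $2^k$ outside-neighbourhood classes at each node.

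Getting $2^{k+1}$ rather than $2^{k+1}-1$ does not matter for boundedness. The $-1$ can be recovered, for instance, by letting the classes of $X_a$ and $X_b$ with empty outside neighbourhood share one label, since no edges are ever added to that label.

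The one routine detail you leave implicit is the order of the relabellings in step~4, which must avoid merging two different $t$-classes. This is easy: first merge each $t$-class that contains an $a$-class into one of its own $a$-labels, then move the remaining $t$-classes (those made up only of $b$-classes) into the freed labels of $[2^k]$.
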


The single property of rank-width we will use is its characterization by Oum and Seymour in terms of forbidden obstructions which we now describe.
The rank of two disjoint vertex sets $A,B$
of a graph $G$, denoted 
$\rk_G(A,B)$, is defined as the rank, over the two-element field, of the $(0,1)$-matrix with rows $A$ and columns $B$, where the entry at row $a\in A$ and column $b\in B$ is $1$ if $ab\in E(G)$ and $0$ otherwise.
For a single set $A \subseteq V(G)$ we define $\rk_G(A) := \rk(A,V(G) - A)$. (This is also known as the \emph{cutrank} of A.)

A set $W$ of vertices of $G$ is \emph{well-linked}, if for every bipartition $X,Y$ of $W$ and every set $Z$ with $X \subseteq Z \subseteq V(G)-Y$,
the cut-rank of $Z$ satisfies $\rk_G(Z)\ge \min(|X|,|Y|)$.

\begin{theorem}[{\cite[Thm. 5.2]{oum2006}}]\label{fact:well-linked}
Every graph of rank-width greater than $k$ contains a well-linked set of size~$k$.
\end{theorem}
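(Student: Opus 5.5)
We plan to follow Oum and Seymour and argue through \emph{tangles} of the cut-rank function. Write $\rho(A) := \rk_G(A)$. Two properties of $\rho$ are routine to check and drive everything: it is symmetric and submodular, and $\rho(\{v\}) \le 1$ for every vertex. We also use the two-set connectivity
\[
\lambda(X,Y) := \min\{\rho(Z) : X \subseteq Z \subseteq V(G)-Y\}.
\]
Note that $W$ is well-linked exactly when $\lambda(X,Y) \ge \min(|X|,|Y|)$ for every bipartition $X,Y$ of $W$.

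\textbf{Step 1 (duality).} Rank-width is the branch-width of $\rho$. We invoke the Robertson--Seymour tangle duality, in the form proved by Geelen, Gerards, Robertson and Whittle and by Oum and Seymour for arbitrary symmetric submodular integer functions. It gives: if the rank-width of $G$ exceeds $k$, then there is a tangle $\mathcal T$ of order $k+1$. Concretely, $\mathcal T$ is a family of "small" sets with the following properties.
\begin{itemize}
\item For each $A$ with $\rho(A) \le k$, exactly one of $A$ and $V(G)-A$ is small.
\item No three small sets cover $V(G)$.
\item Every singleton is small.
\end{itemize}
We use this black box rather than reprove it. It is the only place where the exact threshold $k$ enters, and the rest of the argument must not lose anything.

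\textbf{Step 2 (extracting $W$).} Among all small sets $A$ with $\rho(A) \le k$, we choose one that is inclusion-maximal, breaking ties by maximizing $\rho(A)$. We then take $W$ to be a set of $k$ vertices of $V(G)-A$ that witnesses the rank of the cut. Concretely, $\rho(A) = \rk(A, V(G)-A)$, and we pick the columns of a maximum-rank submatrix. If $\rho(A) < k$, we first enlarge $A$ by single vertices; this stays small by the tangle axioms and submodularity, and the maximality of $A$ rules out getting stuck.

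The intended invariant is the following. For any $Y' \subseteq W$, the rows of $A$ restricted to the columns of $Y'$ have rank $|Y'|$. Hence any $Z$ separating $Y'$ from the part of $A$ that "sees" $Y'$ has $\rho(Z) \ge |Y'|$.

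\textbf{Step 3 (verifying well-linkedness).} Fix a bipartition $X,Y$ of $W$ with $|X| \le |Y|$, and suppose some $Z$ with $X \subseteq Z \subseteq V(G)-Y$ has $\rho(Z) < |X| \le k/2$. By the tangle, one of $Z$ and $V(G)-Z$ is small. We split into two cases.
\begin{itemize}
\item If $Z$ is small, consider $A \cup Z$. Submodularity gives $\rho(A\cup Z) + \rho(A\cap Z) \le \rho(A) + \rho(Z)$, and the rank of the cut $(A\cap Z,\ \cdot\,)$ is at least the rank of $A$ against the columns $X$, up to what is absorbed. We aim to show that $\rho(A\cup Z) \le k$. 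Then $A \cup Z$ is a strictly larger small set, since $X \not\subseteq A$, which contradicts maximality.
\item If $V(G)-Z$ is small, the symmetric computation applies with $Y$ in place of $X$. We then reach a contradiction either with maximality or with the "no three small sets cover $V(G)$" axiom, applied to $A$, $V(G)-Z$, and a small set containing the rest.
\end{itemize}

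\textbf{Main obstacle.} The delicate point is the exact submodular bookkeeping in Step 3. We must ensure that uncrossing $A$ with $Z$ costs nothing, that is, $\rho(A\cup Z) \le k$ exactly rather than up to a constant factor. This is why $W$ has to be chosen as a rank-witness for a \emph{maximal} small set, and not arbitrarily. If the direct choice fails, the fallback is the Oum--Seymour route. There one first shows that a tangle of order $k+1$ yields a set $W$ that cannot be split by a small-order separation with both sides containing many vertices of $W$. One then checks that this is exactly the well-linkedness condition for bipartitions, which holds because the bound $\min(|X|,|Y|)$ is at most $k/2$, comfortably below the tangle order.
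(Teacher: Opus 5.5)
Step 1 is sound, and Step 2 correctly shows that an inclusion-maximal small set $A$ has $\rho(A)=k$. The gap is in Step 3, which carries all the content: the inequality $\rho(A\cup Z)\le k$ that you ``aim to show'' does not follow from your setup. Submodularity gives $\rho(A\cup Z)\le k+\rho(Z)-\rho(A\cap Z)$, so you need $\rho(A\cap Z)\ge\rho(Z)$. The only lower bound your choice of $W$ provides is
\[
\rho(A\cap Z)\ \ge\ \rk(A\cap Z,X)\ \ge\ \rk(A,X)-\rk(A\setminus Z,X)\ \ge\ |X|-\rho(Z).
\]
This yields only $\rho(A\cup Z)\le k+2\rho(Z)-|X|$. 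So the maximality of $A$ excludes only those $Z$ with $2\rho(Z)\le |X|$, and symmetrically those with $2\rho(Z)\le|Y|$ when $V(G)\setminus Z$ is small. That is well-linkedness up to a factor $2$, not the stated bound. Your ``invariant'' controls only separations $Z$ that avoid $A$, whereas the dangerous $Z$ cut through $A$; it is unclear that a rank-witness for $A$ is well-linked at all. The fallback paragraph gives no construction of a $W$ of size exactly $k$, so it does not close the gap. (The paper itself does not prove this statement; it cites Oum and Seymour.)

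A repair keeps your tangle $\mathcal{T}$ of order $k+1$ but chooses $W$ differently. Grow $W$ one vertex at a time, maintaining the invariant $(\ast)$: $|W\cap Z|\le\rho(Z)$ for every $Z\in\mathcal{T}$. First, every set of at most $k$ vertices lies in $\mathcal{T}$; this follows by induction on size from $\rho(\{v\})\le1$, the axiom excluding $V(G)\setminus\{v\}$, and the three-set axiom. Suppose $|W|<k$ and no vertex can be added. Then each $v\notin W$ lies in some \emph{tight} $Z_v\in\mathcal{T}$, meaning $|W\cap Z_v|=\rho(Z_v)$. Tight sets are closed under union. Take tight $Z_1,Z_2$. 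If $\rho(Z_1\cap Z_2)\le k$, then $Z_1\cap Z_2\in\mathcal{T}$, since otherwise its complement and $Z_1$ would cover $V(G)$. Submodularity with $(\ast)$ then gives $\rho(Z_1\cup Z_2)\le|W\cap(Z_1\cup Z_2)|<k$. So $Z_1\cup Z_2\in\mathcal{T}$, because its complement together with $Z_1,Z_2$ would cover $V(G)$, and it is tight. If instead $\rho(Z_1\cap Z_2)\ge k+1$, then $\rho(Z_1\cup Z_2)\le |W\cap(Z_1\cup Z_2)|+|W|-k-1<|W\cap(Z_1\cup Z_2)|$. Again $Z_1\cup Z_2\in\mathcal{T}$, contradicting $(\ast)$. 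Hence $U:=\bigcup_{v\notin W}Z_v\in\mathcal{T}$, and $U\cup W=V(G)$ with $W\in\mathcal{T}$ violates the covering axiom. So the process reaches $|W|=k$. Well-linkedness is then immediate: if $\rho(Z)<\min(|X|,|Y|)\le k$, one of $Z$ and $V(G)\setminus Z$ is in $\mathcal{T}$, and $(\ast)$ gives $|X|\le\rho(Z)$ or $|Y|\le\rho(Z)$.
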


We will also need the following observation.

\begin{observation}\label{obs:well-linked-hereditary}
    If $W$ is well-linked in $G$, then also every subset $W'$ of $W$ is well-linked in $G$.
\end{observation}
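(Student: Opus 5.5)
The plan is to argue directly from the definition of well-linkedness. Fix $W' \subseteq W$, a bipartition $X', Y'$ of $W'$, and a set $Z'$ with $X' \subseteq Z' \subseteq V(G) - Y'$. We must show $\rk_G(Z') \ge \min(|X'|,|Y'|)$. The idea is to extend the bipartition of $W'$ to one of $W$ in a way that keeps $Z'$ admissible, and then read off the inequality from the well-linkedness of $W$.

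Define the bipartition $X, Y$ of $W$ by
\[
X := W \cap Z', \qquad Y := W - Z'.
\]
We check the three facts needed.
\begin{itemize}
\item Since $X' \subseteq Z'$ and $X' \subseteq W$, we get $X' \subseteq X$.
\item Since $Y' \cap Z' = \emptyset$ and $Y' \subseteq W$, we get $Y' \subseteq Y$.
\item By construction $X \subseteq Z'$ and $Z' \cap Y = \emptyset$, so $X \subseteq Z' \subseteq V(G) - Y$.
\end{itemize}
Hence $Z'$ is an admissible set for the bipartition $X, Y$ of $W$. Well-linkedness of $W$ then gives
\[
\rk_G(Z') \ge \min(|X|,|Y|) \ge \min(|X'|,|Y'|),
\]
where the second inequality uses $|X| \ge |X'|$ and $|Y| \ge |Y'|$.

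There is no real obstacle. The only point needing care is assigning the vertices of $W - W'$ according to their membership in $Z'$, so that $Z'$ stays a legal separating set. A degenerate case where $X$ or $Y$ is empty is harmless, because then the bound $\min(|X'|,|Y'|)$ is $0$ and holds trivially.
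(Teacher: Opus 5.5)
Your proposal is correct and matches the paper's proof: both extend the bipartition of $W'$ to the bipartition $X := W \cap Z'$, $Y := W - Z'$ of $W$, apply well-linkedness of $W$ to the same set $Z'$, and conclude using $X' \subseteq X$ and $Y' \subseteq Y$. You additionally verify explicitly that $Z'$ is admissible for $(X,Y)$ and address the degenerate case, which the paper leaves implicit.
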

\begin{proof}
    Consider a bipartition $X',Y'$ of $W'$ and a set $Z$ with $X' \subseteq Z \subseteq V(G)-Y'$.
    Now for $X := W \cap Z$ and $Y := W - Z$ we have
    \[
        \rk_G(Z) \geq \min(|X|,|Y|) \geq \min(|X'|,|Y'|)    
    \]
    where the first inequality is by well-linkedness of $W$, and the second by $X \supseteq X'$ and $Y \supseteq Y'$.
\end{proof}



\section{Brignall-Cocks Patterns}
\label{sec:patterns}
We define the main obstruction to $2$-WQO that we will be using throughout the paper.
Those are patterns which were originally introduced by Brignall and Cocks \cite{Brignall-Cocks-Framework}, extending earlier work of Brignall and Cocks \cite{Brignall_Cocks_2022} and of
Collins, Foniok, Korpelainen, Lozin, and Zamaraev \cite{COLLINS2018145}. A special case of those patterns appears in the work of M\"ahlmann \cite{mahlmann2025forbidden}.
An example is depicted in \Cref{fig:pattern}.

\begin{definition}[Brignall-Cocks patterns]\label{def:pattern}
    For $m,r \in \N$,
    an \emph{$(m,r)$-pattern} is a graph $G$ with vertex set $[m]\times [r]$
      satisfying the following.
    For $j\in [r]$, denote by $L_j(G) := \{(i,j) : i \in [m]\}$ the $j$th \emph{layer}.
    \begin{enumerate}
        \item Each layer $L_j(G)$ induces either a clique or an independent set in $G$.
        \item For each pair of distinct layers $L_j(G)$ and $L_{j'}(G)$:
        \begin{itemize}
            \item If $|j- j'| > 1$, then $L_j(G)$ and $L_{j'}(G)$ are either fully adjacent or fully non-adjacent.
            \item If $|j- j'| = 1$, then there is a relation ${\sim} \in \{=,\neq,\leq,\geq,<,>\}$ such that
        \[
            (i,j)(i',j') \in E(G) \quad \Leftrightarrow \quad i \sim i'
            \quad\quad \text{for all $i,i' \in [m]$.}
        \]
        \end{itemize}
    \end{enumerate}
\end{definition}

\begin{figure}[htbp]
    \centering
\vspace{-1.8em}
\begingroup
\resizebox{0.8\linewidth}{!}{%
\begin{tikzpicture}[
  xedge/.style= {line width=0.45pt, gray!65!black, opacity=0.6},
  dot/.style   = {circle, fill=black, inner sep=2pt, outer sep=0pt},
]
\pgfmathsetmacro{\xs}{2.2}
\pgfmathsetmacro{\ys}{0.6}
\foreach \r in {1,2,3}{
  \foreach \c in {1,...,7}{
    \pgfmathsetmacro{\xp}{(\c-1)*\xs}
    \pgfmathsetmacro{\yp}{(3-\r)*\ys}
    \coordinate (u\r-\c) at (\xp, \yp);
  }
}
\foreach \r in {1,2}{ \draw[xedge] (u\r-1) to[out=90, in=90] (u\r-3); }
\draw[xedge] (u3-1) to[out=90, in=90, looseness=1.8] (u3-3);
\draw[xedge] (u1-1) -- (u2-2);
\draw[xedge] (u1-1) to[out=90, in=90] (u2-3);
\draw[xedge] (u1-1) -- (u3-2);
\draw[xedge] (u1-1) to[out=90, in=90] (u3-3);
\draw[xedge] (u2-1) to[out=90, in=90] (u1-3);
\draw[xedge] (u2-1) -- (u3-2);
\draw[xedge] (u2-1) to[out=90, in=90] (u3-3);
\draw[xedge] (u3-1) to[out=90, in=90] (u1-3);
\draw[xedge] (u3-1) to[out=90, in=90] (u2-3);
\draw[xedge] (u1-7) -- (u2-6);
\draw[xedge] (u1-7) -- (u3-6);
\draw[xedge] (u2-7) -- (u3-6);
\draw[xedge] (u1-4) -- (u2-3);
\draw[xedge] (u1-4) -- (u2-5);
\draw[xedge] (u1-4) -- (u3-3);
\draw[xedge] (u1-4) -- (u3-5);
\draw[xedge] (u2-4) -- (u3-3);
\draw[xedge] (u2-4) -- (u3-5);
\draw[xedge] (u1-1)--(u1-2)--(u1-3);
\draw[xedge] (u1-5)--(u1-6)--(u1-7);
\draw[xedge] (u2-1)--(u2-2)--(u2-3);
\draw[xedge] (u2-5)--(u2-6)--(u2-7);
\draw[xedge] (u3-1)--(u3-2)--(u3-3);
\draw[xedge] (u3-5)--(u3-6)--(u3-7);
\draw[xedge] (u1-4) -- (u2-5);
\draw[xedge] (u1-4) -- (u3-5);
\draw[xedge] (u2-4) -- (u3-5);
\draw[xedge] (u1-3) -- (u2-4);
\draw[xedge] (u1-3) -- (u3-4);
\draw[xedge] (u2-3) -- (u1-4);
\draw[xedge] (u2-3) -- (u3-4);
\draw[xedge] (u3-3) -- (u1-4);
\draw[xedge] (u3-3) -- (u2-4);
\draw[xedge] (u1-5) -- (u2-5);
\draw[xedge] (u2-5) -- (u3-5);
\draw[xedge] (u1-5) to[bend left=20] (u3-5);
\draw[xedge] (u1-7) -- (u2-7);
\draw[xedge] (u2-7) -- (u3-7);
\draw[xedge] (u1-7) to[bend left=20] (u3-7);
\foreach \c in {1,...,7}{
  \pgfmathsetmacro{\xp}{(\c-1)*\xs}
  \node[font=\small] at (\xp, -0.75) {$L_{\c}$};
}
\foreach \c/\rel in {1/{\leq}, 2/{=}, 3/{\neq}, 4/{<}, 5/{=}, 6/{\geq}}{
  \pgfmathsetmacro{\xp}{(\c-0.5)*\xs}
  \node[font=\small] at (\xp, -0.45) {$\rel$};
}
\foreach \r in {1,2,3}{
  \foreach \c in {1,...,7}{
    \node[dot] at (u\r-\c) {};
  }
}
\end{tikzpicture}%
}%
\endgroup
    \caption{A $(3,7)$-pattern. The connections between consecutive layers are governed by the indicated relations.
    Layers $L_1$ and $L_3$ are fully adjacent, while all other non-consecutive pairs are fully non-adjacent. Layers $L_5$ and $L_7$ are cliques; all other layers are independent sets.}
    \label{fig:pattern}
\end{figure}

\begin{definition}\label{def:pattern-free}
    A graph is \emph{$(m,r)$-pattern-free} if it contains no $(m,r)$-pattern as an induced subgraph.
    A graph class $\CC$ is \emph{$(m,r)$-pattern-free} if every graph in the class is.
    A graph class $\CC$ is \emph{pattern-free} if there are $m,r \in \N$ such that $\CC$ is $(m,r)$-pattern-free.
\end{definition}


\begin{lemma}\label{lem:2wqo-pattern-free}
    If $\CC$ is hereditary and $2$-WQO, then $\CC$ is pattern-free.
\end{lemma}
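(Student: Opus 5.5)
We argue by contraposition: suppose $\CC$ is hereditary but not pattern-free, so for every $m,r$ it contains an $(m,r)$-pattern as an induced subgraph, and hence, by heredity, contains an $(m,r)$-pattern itself. We will build an infinite antichain of $2$-labelled graphs from $\CC$.

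\textbf{Step 1: Ramsey-type normalisation.} There are finitely many choices for the types of the data in \Cref{def:pattern}: clique or independent set for each layer, full or empty for each non-consecutive pair, and one of six relations for each consecutive pair. Take patterns with $r$ huge. By Ramsey/pigeonhole, we pass to a long run of consecutive layers on which the types are "periodic'' or homogeneous in a usable sense. Concretely, we only need a sequence of patterns $P_n$ with $n\to\infty$ layers, each with $m\ge n$, that are \emph{rigid} in the following sense. Given the types, the graph is determined, and the layers together with the consecutive relations form a path-like structure.

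For the antichain itself, we take a single diagonal vertex set rather than whole layers. Let $D_n=\{(j,j):j\in[n]\}$, or a suitable staircase chosen according to the relations so that consecutive chosen vertices are adjacent or non-adjacent in a controlled way. Instead of keeping only $D_n$, we keep a sub-pattern: we restrict each layer $L_j$ to a window of indices so that the induced graph is again a pattern with $m$ small. Then we label every vertex of the first layer $L_1$ and of the last layer $L_n$ with label $2$, and all other vertices with label $1$. This mirrors the paths example from the introduction: the labelled ends force any label-preserving embedding of $P_n$ into $P_{n'}$ to map first layer to first or last layer, and last to last or first.

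\textbf{Step 2: showing the antichain.} This is the main obstacle. We must show that a label-preserving embedding $\phi\colon P_n\to P_{n'}$ with $n<n'$ is impossible. The plan is to prove a "connectivity/distance'' invariant. In a pattern, after complementing appropriately (a flip, using that non-consecutive layers are homogeneous), each layer is linked to its neighbours only through the relation $\sim$, while non-neighbouring layers are homogeneous towards each other. The end layers are distinguished by label $2$, and the graph on label-$1$ vertices is connected along a path of layers of length $n$. We then aim to show that $\phi$ must send layer $L_j$ into layer $L_j$ of $P_{n'}$, by induction on $j$, using twin-classes or neighbourhood-distinguishing arguments. The set of vertices of $P_{n'}$ that interact non-homogeneously with $\phi(L_j)$ lies in layers $j\pm1$. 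In particular, $\phi(L_n)$ would sit in layer $n<n'$, which carries label $1$, a contradiction.

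To make this induction robust, we first choose $m$ large relative to the type data (say $m\ge 3$ suffices to make relations like $<$ versus $\le$ non-degenerate and layers non-homogeneous to each other). We then reduce to patterns whose consecutive relations are all non-homogeneous, which holds automatically for $m\ge2$. Where the pure layered argument fails, for instance when two non-consecutive layers' homogeneous connections mimic a consecutive relation, we will instead pass via Ramsey to a subsequence of layers of uniform type and handle the finitely many type-cases directly. Since $\CC$ is hereditary, the labelled graphs $P_n$ all lie in $\CC$, and together they give an infinite antichain, contradicting $2$-WQO.
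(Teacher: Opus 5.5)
There is a genuine gap. Your end-layer labelling is the right one, and your remark that only neighbouring layers can interact non-homogeneously with a layer is the right engine. But Step~2 rests on an induction hypothesis that is false: a label-preserving embedding $\phi$ need not map $L_j$ into the $j$th layer of $P_{n'}$. A priori, the image of a layer can be split between the two layers adjacent to the image of the previous one, and $L_1$ can be split between the first and the last layer. Worse, the layer index of the image can move backwards.

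For example, let $G$ have three independent layers, with $L_1$ and $L_3$ non-adjacent, $(a,1)(a',2)\in E(G)$ iff $a\le a'$, and $(a,2)(b,3)\in E(G)$ iff $a\ge b$. Let $H$ be an $(m',t)$-pattern with $m'\ge 3m$ and $t\ge 3$ whose first two layers are independent and joined by $\le$. Then $(a,1)\mapsto(3a-1,1)$, $(a,2)\mapsto(3a,2)$, $(b,3)\mapsto(3b-2,1)$ is an induced embedding of $G$ into $H$. It sends $L_3$ back into the first layer of $H$ and preserves your labels. This really occurs inside a hereditary non-pattern-free class: $G$ is just the first three layers of a long pattern with independent layers, non-adjacent non-consecutive layers, and consecutive relations alternating $\le,\ge$. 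So a family that contains patterns with an odd number of layers need not be an antichain. No Ramsey normalisation or case analysis over types will give you $L_j\mapsto L_j$.

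Two ingredients are missing.

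\emph{Subset tracking.} Track only a subset $A_s\subseteq L_s$ whose image lies in a single layer $L_{w(s)}$ of the larger pattern. The vertices of $L_{s+1}$ with the same first coordinates are, with at most one exception, mixed to $A_s$, i.e.\ they have both a neighbour and a non-neighbour in it. Hence their images lie in $L_{w(s)-1}\cup L_{w(s)+1}$, and pigeonhole gives $A_{s+1}$ with $|A_{s+1}|\ge(|A_s|-1)/2$. This forces $m$ to grow exponentially in the number of layers; the paper takes $m_r=2^{r+1}-1$. So your suggestion that $m\ge 3$, or small windows, suffices is incompatible with the induction. This step also makes the type normalisation of Step~1 unnecessary, because homogeneous connections never produce mixed vertices.

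\emph{Parity instead of monotonicity.} After reversing the larger pattern if necessary so that $w(1)=1$, the steps $|w(s+1)-w(s)|=1$ give $w(s)\equiv s\pmod 2$. So if you use only patterns with an even number $r$ of layers, then $w(r)\neq 1$, while $w(r)\le r<t$ gives $w(r)\neq t$. Thus some end-layer vertex is mapped to a middle-labelled vertex, which is the contradiction.
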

\begin{proof}
    Assume $\CC$ is hereditary and not pattern-free. We will show that $\CC$ is not $2$-WQO.
    By assumption, $\CC$ contains an infinite sequence of graphs $G_2,G_4,G_6\ldots$ such that for each even number $r\geq 2$, $G_r$ is an $(m_r,r)$-pattern, for
    some $m_r$ which depends only on $r$ and which we will define later.
    For each $G_r$ consider the vertex-labeling $\ell_r : V(G_r) \rightarrow [2]$ that marks the first and last layer of $G_r$:
    \[
        \ell_r(v) := \begin{cases}
            1& \text{ if $v \in L_1(G_r) \cup L_{r}(G_r)$, and}\\
            2& \text{ otherwise.}
        \end{cases}
    \]
    Assume towards a contradiction that $\CC$ is $2$-WQO.
    Then there are $2 \leq r < t \in \N$ and a label preserving embedding $\phi : V(G_r) \to V(G_{t})$.
    As $\phi$ is label preserving we have 
    \[
        \phi\big(L_1(G_r)\big)  \subseteq L_1(G_{t}) \cup L_{t}(G_{t}).
    \]
    Up to reversing the order of the layers in $G_t$, which preserves the property of being an $(m_t,t)$-pattern,
    we can assume there is
    a subset $A_1 \subseteq L_1(G_r)$ of size at least $|L_1(G_r)|/2 = m_r/2$ that embeds to $\phi(A_1) \subseteq L_1(G_t)$.

    \begin{claim}\label{claim:induction}
        For each $s\in [r]$
        there is a set $A_s \subseteq L_s(G_r)$ and a layer index $w(s) \in [r]$
        such that:
        \begin{enumerate}
            \item $|A_s| \geq f^s(m_r)$, where $f(x) := (x-1)/2$, and $f^s(\cdot)$ denotes the $s$-times iteration of $f$,
            \item $\phi(A_s) \subseteq L_{w(s)}(G_t)$, and
            \item $w(1) = 1$ and $|w(s) - w(s-1)| = 1$ for all $s \geq 2$.
        \end{enumerate}
    \end{claim}

    Intuitively the claim states that we find a sequence of subsets $A_1, \ldots, A_r$ of $L_1(G_r), \ldots, L_r(G_r)$, such that the $\phi$-image of the $A_s$ is contained in a single layer $w(s)$ of $G_t$, which is neighboring the previous layer $w(s-1)$. This means the embedding can neither skip layers nor stay in the same layer.


    \begin{claimproof}[Proof of \Cref{claim:induction}]
        We proceed by induction on $s$.
        We have already constructed $A_1$ for the base case, before stating the claim.
        For the inductive step, assume we have already constructed $A_s$ and $w(s)$.
        Given a set of vertices $S$ in $G_r$ (or in $G_t$), we call a vertex $v$ \emph{mixed} to $S$, if $v \notin S$ and $v$ has a neighbor and a non-neighbor in $S$.
        Let $I_s$ be the first indices of the vertices in $A_s$, i.e., $A_s = I_s \times \{s\}$, and define $A'_{s+1} := I_s \times \{s + 1\} \subseteq L_{s+1}(G_r)$ to be the corresponding vertices from the next layer of $G_r$.
        Let 
        ${\sim} \in \{=,\neq,\leq,\geq,<,>\}$ be the relation that governs the adjacency between $A_s$ and $A'_{s+1}$ in $G_r$.
        One can easily verify that:
        \begin{itemize}
            \item If ${\sim}\in\{=,\neq\}$, then all vertices in $A'_{s+1}$ are mixed to $A_s$ in $G_r$.
            \item If ${\sim}\in\{\leq,\geq,<,>\}$, then all but one vertex in $A'_{s+1}$ are mixed to $A_s$ in $G_r$.
        \end{itemize}
        Since $\phi$ is an embedding, this means there is a subset $A''_{s+1} \subseteq A'_{s+1}$ of size at least $|A_s| - 1$ such that all vertices in $\phi(A''_{s+1})$ are mixed to $\phi(A_s)$ in $G_t$.
        Observe that:
        \begin{itemize}
            \item $\phi(A''_{s+1})$ contains no vertices from $L_{w(s)}(G_t)$. 
            
            Otherwise, mixedness to $\phi(A_s) \subseteq L_{w(s)}(G_t)$ would contradict the fact that $L_{w(s)}(G_t)$ is an independent set or a clique.
            \item $\phi(A''_{s+1})$ contains no vertices from $L_{j}(G_t)$ with $|w(s)-j|>1$. 
            
            Otherwise, mixedness to $\phi(A_s)$ would contradict the fact that $L_{w(s)}(G_t)$ and $L_{j}(G_t)$ are either fully adjacent or fully non-adjacent.
        \end{itemize}
        Hence, $\phi(A''_{s+1})$ is fully contained in $L_{w(s)+1}(G_t) \cup L_{w(s)-1}(G_t)$.
        By the pigeonhole principle, we can choose $w(s+1)\in\{w(s)+1,w(s)-1\}$ and subset $A_{s+1}$ of $A''_{s+1}$ of size at least $|A''_{s+1}|/2$ such that $\phi(A_{s+1}) \subseteq L_{w(s+1)}(G_t)$.
        This finishes the proof of the claim by induction.
    \end{claimproof}
    Let $g(x) := 2x + 1$ be the inverse of $f$ and set $m_r := g^r(1)$.
    By the above claim, $|A_r| \geq 1$ and there is a vertex $a \in A_r \subseteq L_r(G_r)$ that is labeled $\ell_r(a) = 1$ and mapped to a vertex $\phi(a) \in L_{w(r)}(G_t)$.
    Observe that for all $s\in [r]$, $w(s)$ is even if and only $s$ is. As $r$ was chosen even, we have $w(r) \neq 1$. Moreover, $w(r) \leq r$ so $w(r) \neq t > r$.
    Then $\phi(a)$ is neither part of the first nor last layer of $G_t$ and thus labeled $\ell_t(\phi(a)) = 2$.
    This contradicts the assumption that $\phi$ was label preserving.
\end{proof}

We observe that the prime example of a non-$2$-WQO class, the class of all paths, contains large patterns.

\begin{lemma}\label{lem:paths-not-pattern-free}
    The class of all paths is not pattern-free. 
    More precisely, the $t$-vertex path $P_t$ contains an $(m,r)$-pattern for $t := (r+1) \cdot m$.
\end{lemma}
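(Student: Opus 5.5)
Every layer will be an independent set, all non-consecutive layer pairs will be fully non-adjacent, and every consecutive pair will be governed by the relation $=$. Such a graph is a disjoint union of $m$ paths $P_r$, namely $\{(i,j) : j\in[r]\}$ for each $i\in[m]$. So it suffices to find $m$ pairwise non-adjacent induced copies of $P_r$ inside $P_t$, which is a straightforward embedding.

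Concretely, we number the vertices of $P_t$ as $1,\dots,t$ along the path, with $t=(r+1)m$. We cut the path into $m$ consecutive blocks of $r+1$ vertices each; block $i$ is $\{(i-1)(r+1)+1,\dots,i(r+1)\}$. In block $i$ we use its first $r$ vertices as the image of row $i$, mapping $(i,j)\mapsto (i-1)(r+1)+j$, and we discard the last vertex of each block. The discarded vertex acts as a gap separating consecutive blocks.

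We then check the conditions of \Cref{def:pattern}. Two chosen vertices of $P_t$ are adjacent iff their positions differ by exactly one. Positions within a block differ by at least one in $j$. Positions in different blocks differ by at least $2$, because of the discarded separator vertex. Hence $(i,j)$ and $(i',j')$ are adjacent iff $i=i'$ and $|j-j'|=1$. The conditions follow directly:
\begin{itemize}
\item each layer $L_j$ is independent, since its vertices are spread across distinct blocks;
\item layers with $|j-j'|>1$ are fully non-adjacent;
\item consecutive layers obey the relation $=$.
\end{itemize}
The image is therefore an induced $(m,r)$-pattern in $P_t$. Letting $r$ and $m$ grow shows that the class of paths is not pattern-free.

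There is no real obstacle here. The only care needed is the separator vertex, which prevents the last vertex of one block from being adjacent to the first vertex of the next. This is exactly why each block has length $r+1$ rather than $r$.
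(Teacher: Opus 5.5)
Your proof is correct and follows the same approach as the paper. Both observe that $P_t$ contains the disjoint union $mP_r$ as an induced subgraph, and that this union is an $(m,r)$-pattern whose layers are the $j$th vertices of the paths, with consecutive layers governed by $=$. Your block construction with a discarded separator vertex in each block of $r+1$ vertices spells out the embedding that the paper leaves implicit.
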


\begin{proof}
    $P_t$ contains as an induced subgraph the disjoint union $mP_r$ of $m$ many paths of length $r$.
    This is an $(m,r)$-pattern, where each layer  $L_j(mP_r)$ is the union of the $j$th vertices from all the $m$ many paths.
\end{proof}

Finally, patterns are robust with regard to flips.
This has to be argued, as the flip-partition might not agree with the partition of the pattern into layers.

\begin{restatable}{lemma}{flippattern}\label{lem:flip-pattern}
    Let $\CC$ be a class that is pattern-free and $k\in \N$. Then the class $\CC^{(k)}$ of $k$-flips of graphs from $\CC$ is also pattern-free.
    More precisely, every $k$-flip of a $(k^r \cdot m, r)$-pattern contains an $(m,r)$-pattern.
\end{restatable}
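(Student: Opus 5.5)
The plan is to pass to a sub-pattern on which the flip partition is aligned with the columns, so that the flip acts uniformly on each pair of layers. Let $G$ be a $(k^r m, r)$-pattern and $\KK$ a partition of $V(G)$ into at most $k$ parts. Let $F\subseteq\KK^2$ be symmetric and $H := G\oplus F$.

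The first step is to choose a set $I\subseteq[k^r m]$ of column indices, with $|I|\ge m$, such that for every layer $j\in[r]$ all vertices $(i,j)$ with $i\in I$ lie in a common part $K_j\in\KK$. We build this by iterated pigeonhole, one layer at a time. Start with $I_0 := [k^r m]$. Given $I_{j-1}$, the vertices $(i,j)$ for $i\in I_{j-1}$ are split among at most $k$ parts, so some part contains at least $|I_{j-1}|/k$ of them. Let $I_j$ be the set of the corresponding indices. After $r$ steps, $I := I_r$ has size at least $m$; shrink it to exactly $m$ elements.

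Next we relabel $I$ increasingly as $[m]$ via the unique order-preserving bijection, and let $P$ be the subgraph of $H$ induced on $I\times[r]$. Restricting to $I\times[r]$ in $G$ already gives an $(m,r)$-pattern, because the defining relations $\{=,\neq,\le,\ge,<,>\}$ are preserved under order-preserving reindexing. We then check that the flip preserves each pattern condition:
\begin{itemize}
\item Within a layer $j$, all pairs lie in $K_j\times K_j$. Either all of them are flipped or none are, so a clique stays a clique or becomes independent, and vice versa.
\item For two layers with $|j-j'|>1$, either every pair between them is flipped or none is. Full adjacency or full non-adjacency is therefore preserved, possibly swapped.
\item For consecutive layers governed by $\sim$, either nothing changes, or adjacency becomes $\neg(i\sim i')$. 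This is again one of the allowed relations: $=$ and $\neq$ swap, $\le$ swaps with $>$, and $\ge$ swaps with $<$.
\end{itemize}
The one point needing care is that flipping concerns only distinct vertices. Within a layer this is harmless, since the definition of a clique or independent set only concerns distinct vertices. Between layers all vertices are distinct anyway. So $P$ is an $(m,r)$-pattern induced in $H$.

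Finally, for the class statement: suppose $\CC$ is $(m,r)$-pattern-free. Any graph in $\CC^{(k)}$ that contained a $(k^r m,r)$-pattern would do so as an induced subgraph, and the flip can be undone by the same flip. Applying the argument above to the original graph shows it would contain an $(m,r)$-pattern, a contradiction. More precisely, a $(k^r m, r)$-pattern induced in $G\oplus F$ is a $k$-flip, under the restricted partition, of the induced subgraph of $G$ on the same vertices. Hence that subgraph of $G$ contains an $(m,r)$-pattern, so $\CC^{(k)}$ is $(k^r m,r)$-pattern-free.

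The main obstacle is only the bookkeeping of aligning the flip partition with the layers, which the iterated pigeonhole handles. Everything after that is a direct check that the list of allowed relations is closed under complement.
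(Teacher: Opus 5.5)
Your proposal is correct and takes the same approach as the paper. The paper pigeonholes once over the at most $k^r$ color sequences $(\KK((i,1)),\ldots,\KK((i,r)))$ rather than layer by layer, which gives the same bound; it then relabels order-preservingly and checks the three conditions using closure of $\{=,\neq,\leq,\geq,<,>\}$ under negation, exactly as you do. Your final paragraph, which uses that flips are involutive and restrict to induced subgraphs to derive the class-level statement, fills in a step the paper leaves implicit.
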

\begin{proof}[Proof sketch]
    Using the pigeonhole principle, we find a still large induced subpattern on which the flip partition is a coarsening of the partition of the pattern into layers.
    It follows from the definition of a pattern that flipping between layers does not destroy the pattern. See \Cref{sec:appendix-flips} for details.
\end{proof}
\section{Pattern-free implies monadically dependent}\label{sec:mdep}

The goal of this section is to prove the following theorem.

\begin{theorem}\label{thm:2wqo-dependent}
    Every pattern-free graph class is monadically dependent.
\end{theorem}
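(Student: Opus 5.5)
We argue by contraposition: assuming $\CC$ is not monadically dependent, we show that $\CC$ contains $(m,r)$-patterns for all $m,r$. We use the forbidden-obstruction characterization of monadic dependence from Dreier, Mählmann, and Toruńczyk~\cite{dreier2024flipbreakability}. In its hereditary form, it says roughly the following. A hereditary class is monadically dependent if and only if, for every $r$, there is $k$ such that the class of $k$-flips of graphs in $\CC$ avoids, as induced subgraphs, arbitrarily large members of a finite list of \emph{$r$-subdivided obstructions}. These obstructions are: the star $r$-crossing, the clique $r$-crossing, and the half-graph $r$-crossing of order $n$, together with their comparability-grid variants. Each of them consists of $n$ ``roots'' on each side (or a biclique, co-matching, or half-graph between root sets), connected by $n^2$ (or $n$) vertex-disjoint paths of length $r$ through layers of path vertices. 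For a non-monadically-dependent $\CC$ we therefore obtain a fixed $r$ such that, for every $k$ and $n$, some $k$-flip of a graph in $\CC$ contains one of these obstructions of order $n$ as an induced subgraph; here $k$ may be taken to depend only on $r$. By \Cref{lem:flip-pattern}, patterns in the flipped class transfer back to patterns in $\CC$, up to dividing $m$ by $k^{r'}$. It thus suffices to show that each obstruction type of large order contains an $(m,r')$-pattern, with $m$ growing with $n$ and $r'$ a prescribed length.

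The main step is a case analysis over the obstruction types, extracting large patterns. The point to handle is that the obstructions have a fixed subdivision length $r$, while we need patterns with arbitrarily large $r'$. For the length, I would not rely on the subdivided paths alone. Instead I would route long alternating walks through the grid structure. In an $r$-crossing of order $n$, take the roots $a_1,\dots,a_n$ and $b_1,\dots,b_n$, and paths $P_{i,j}$ from $a_i$ to $b_j$. Choose index sets $I_1, J_1, I_2, J_2, \dots$ that are pairwise disjoint. Then concatenate: roots $a_i$ for $i\in I_1$, the paths $P_{i,\pi_1(i)}$, the roots $b_{\pi_1(i)}$, the paths $P_{\pi_2(\pi_1(i)),\pi_1(i)}$ back to $a$-roots with fresh indices, and so on. Because distinct paths are non-adjacent apart from their endpoints, taking each ``column'' of the $m$ parallel walks as a layer gives an induced subgraph in which consecutive layers are related by $=$. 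Non-consecutive layers are anticomplete, except where root sets meet each other. In the star or plain crossing, roots on one side form an independent set; in the clique variant they form a clique, which is a layer type we allow. Disjointness of the index sets prevents extra root–root adjacencies across distant layers, except the allowed fully adjacent or fully non-adjacent ones. In the half-graph and comparability-grid variants, the root sides are joined by half-graph-like adjacencies, giving relations $\le$ or $<$ between layers. Those layers must be arranged to be consecutive, or else pruned via Ramsey/pigeonhole to subsets where they become homogeneous between non-consecutive layers.

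The number of layers obtained is about $(r+1)\cdot s$ for $s$ zigzag steps, using $m\cdot s$ root indices. Taking $n \ge m s$ yields an $(m, r')$-pattern with $r'$ arbitrary, in the spirit of \Cref{lem:paths-not-pattern-free}. I expect the main obstacle to be the bookkeeping in the half-graph and comparability-grid obstructions. There, the root sets and the internal layers can carry order-type adjacencies to many other layers, so one has to make sure that only consecutive layers are non-homogeneous. To handle this I would first pass, by pigeonhole, to index subsets spaced far apart and monotone, so that the relation between any two non-consecutive layers becomes constant. Only after that would I do the zigzag concatenation.
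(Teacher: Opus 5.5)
There is a genuine gap. Your overall frame matches the paper: contraposition via the Dreier--M\"ahlmann--Toru\'nczyk obstructions, undoing the flips with \Cref{lem:flip-pattern}, and then exhibiting large patterns in each obstruction family. Your root-to-root zigzag over disjoint index intervals is essentially the paper's proof of \Cref{lem:hgc} for half-graph crossings, and it also works for star crossings. Fix the quantifiers, though: there are fixed $r$ and $k$ such that for every $n$ the class contains a $k$-flip of the order-$n$ obstruction. A fixed $k$ is exactly what \Cref{lem:flip-pattern} needs, and ``for every $k$'' is wrong.

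\textbf{Clique crossings.} The uniform zigzag fails here because you misread the definition. It is the \emph{neighbourhood} of each root that is turned into a clique, not the set of roots. When your walk enters $b_{\pi_1(i)}$ along $\pi_{i,\pi_1(i)}$ and leaves along $\pi_{\pi_2(\pi_1(i)),\pi_1(i)}$, the vertices $\pi(i,\pi_1(i),r)$ and $\pi(\pi_2(\pi_1(i)),\pi_1(i),r)$ are adjacent. So your claim that distinct paths are non-adjacent away from their endpoints is false. Concretely, the layers $\{\pi(i,\pi_1(i),r) : i\in I_1\}$ and $\{\pi(\pi_2(\pi_1(i)),\pi_1(i),r) : i\in I_1\}$ are two apart, with the $b$-root layer between them. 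Yet they are joined by the perfect matching $i\leftrightarrow i$. The same happens at every $a$-root turn, and for $r=1$ at both kinds of turns. This violates the requirement that non-consecutive layers be fully adjacent or fully non-adjacent. One fix is to drop the roots and step directly between the two neighbours of each root; for $r=1$ this is a staircase in the rook graph. Alternatively, as the paper does, note that clique crossings contain arbitrarily long induced paths and apply \Cref{lem:paths-not-pattern-free}.

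\textbf{Comparability grids.} The comparability grid is not a rooted, subdivided variant of the crossings. It is the graph on $\{a_{i,j} : i,j\in[n]\}$ in which $a_{i,j}a_{i',j'}$ is an edge iff $i=i'$, or $j=j'$, or $(i<i' \Leftrightarrow j<j')$. It has no root sides and no paths, so your remark about pruning ``half-graph-like adjacencies between root sides'' does not touch it, and this case is left unproved. The fix is easy. Two vertices are non-adjacent exactly when they are strictly anti-monotone ($i<i'$ and $j>j'$). Hence the staircase $a_{1,n},a_{1,n-1},a_{2,n-1},a_{2,n-2},a_{3,n-2},\ldots$ is an induced path on about $2n$ vertices, and \Cref{lem:paths-not-pattern-free} finishes.

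\textbf{Half-graph crossings.} No Ramsey step is needed. Choosing consecutive intervals $I_1<J_1<I_2<\cdots$ with order-preserving bijections between them, as in the paper, already makes every non-consecutive root/end-layer pair homogeneous.
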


Monadic dependence originates in model theory \cite{shelah1986monadic} (it is also called monadic NIP) and is defined through first-order transductions.
We will instead use the combinatorial characterization through forbidden induced subgraphs given by~\cite{dreier2024flipbreakability}, which we will now recall.

\smallskip

For $r \ge 1$, the \emph{star $r$-crossing} of order $n$ is the $r$-subdivision of $K_{n,n}$ (the biclique of order $n$).
More precisely, it consists of \emph{roots} $a_1,\dots,a_n$ and $b_1,\dots,b_n$
together with $r$-vertex paths $\{ \pi_{i,j} : i,j \in [n] \}$ that are pairwise vertex-disjoint (see \Cref{fig:dependent-patterns}).
We write $\pi(i,j,t)$ for the $t$th vertex of path $\pi_{i,j}$, for $t \in [r]$,
so that $\Start(\pi_{i,j}) = \pi(i,j,1)$ and $\End(\pi_{i,j}) = \pi(i,j,r)$.
We require that roots appear on no path, that each root $a_i$ is adjacent to $\{ \pi(i,j,1) : j \in [n] \}$,
and that each root $b_j$ is adjacent to $\{ \pi(i,j,r) : i \in [n] \}$.
The \emph{clique $r$-crossing} of order $n$ is the graph obtained from the star $r$-crossing of order $n$
by turning the neighborhood of each root into a clique.
Moreover, we define the \emph{half-graph $r$-crossing} of order $n$ similarly to the star $r$-crossing of order $n$,
where each root~$a_i$ is instead adjacent to $\{ \pi(i',j,1) : i',j \in [n], i \le i' \}$,
and each root $b_j$ is instead adjacent to $\{ \pi(i,j',r) : i,j' \in [n], j \le j' \}$.
Each of the three $r$-crossings contains no edges other than the ones described.
The \emph{comparability grid} of order \(n\) consists of vertices
\(\{ a_{i,j} \mid i,j \in [n] \}\) and edges between vertices \(a_{i,j}\) and \(a_{i',j'}\) if and only if either $i=i'$, or $j=j'$,
or $i<i'\Leftrightarrow j<j'$.

\begin{figure}[h]
    \centering
    \includegraphics[width = \textwidth]{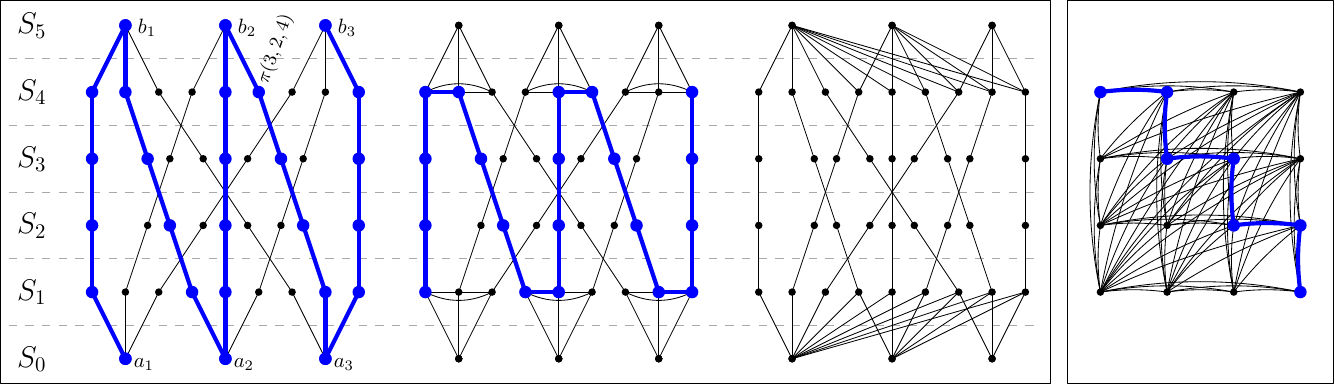}%

    \caption{Left: the star/clique/half-graph $4$-crossing of order $3$. Right: the comparability grid of order $4$.
    Three of the four obstructions contain long induced paths,  marked in blue.
    }
    \label{fig:dependent-patterns}
\end{figure}

We partition the vertices of star, clique, and half-graph $r$-crossings into \emph{slices} $S_0,\dots,S_{r+1}$:
The slice $S_0 := \{a_1,\dots,a_n\}$ consists of the $a$-roots.
The slice $S_t := \{ \pi(i,j,t) : i,j \in [n] \}$, for $t \in [r]$, consists of the $t$th vertices of the paths.
Finally, $S_{r+1} := \{b_1,\dots,b_n\}$ consists of the $b$-roots.


\begin{samepage}
\begin{theorem}[{\cite[Thm.\ 1.6]{dreier2024flipbreakability}}]\label{thm:dependent-forbidden-patterns}
    A graph class $\CC$ is monadically independent if and only if there are $r\geq 1$ and $k \in \N$, such that at least one of the following holds:
    \begin{itemize}
        \item $\CC$ contains a $k$-flip of every star $r$-crossing as an induced subgraph.
        \item $\CC$ contains a $k$-flip of every clique $r$-crossing as an induced subgraph.
        \item $\CC$ contains a $k$-flip of every half-graph $r$-crossing as an induced subgraph.
        \item $\CC$ contains every comparability grid as an induced subgraph.
    \end{itemize}
\end{theorem}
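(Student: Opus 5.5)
The statement is an equivalence, and I would prove the two directions separately. The easy direction is that each of the four conditions implies monadic independence. The real work is the converse: from a monadically independent class, extract one of the four obstructions.

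\textbf{Obstructions imply monadic independence.} Let $\CC$ satisfy one of the four conditions. It suffices to exhibit a first-order transduction from $\CC$ onto the class of all graphs, or at least all bipartite graphs. There are two ingredients.

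First, a $k$-flip is undone by a transduction. Colour the $k$ parts of the flip partition with unary predicates and redefine adjacency by the quantifier-free formula that XORs the edge relation with the flipped pairs of colours. So we may assume $\CC$ contains the star, clique or half-graph $r$-crossings themselves.

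Second, encode an arbitrary bipartite graph $H$ on $[n]\cup[n]$.
\begin{itemize}
\item In a star $r$-crossing, mark the first-slice vertices $\pi(i,j,1)$ for which $ij\in E(H)$.
\item Define $a_i \sim b_j$ by the formula ``there is a marked $\pi(i,j,1)$ adjacent to $a_i$, whose path of length $r$ through the slice predicates $S_1,\dots,S_r$ ends at a neighbour of $b_j$.''
\end{itemize}
This is first-order because $r$ is fixed and the slices are unary colours. For clique crossings the same works once we use the slice colours to discard the intra-slice clique edges. For half-graph crossings we recover $\pi_{i,j}$ as the paths adjacent to $a_i$ but not $a_{i+1}$ (and similarly on the $b$ side); consecutive roots are identified with a successor-definable colouring. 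Comparability grids are handled by the known fact, used in \cite{dreier2024flipbreakability}, that they transduce all grids and hence all graphs.

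\textbf{Monadic independence implies an obstruction.} Here I would go through the dichotomy of \cite{dreier2024flipbreakability} between monadic dependence and \emph{flip-breakability}. The plan has three steps.

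\begin{enumerate}
\item Use the model-theoretic fact that monadic dependence is equivalent to flip-breakability. Flip-breakability means: for every radius $\rho$ there is a bound $k$ such that every large set $W$ contains two large subsets $A,B$ which some $k$-flip places at distance greater than $\rho$. Failure of flip-breakability, which follows from monadic independence via Shelah's characterization (monadic independence yields coding of arbitrary bipartite patterns by formulas with parameters), supplies the following. There are a radius $\rho$, arbitrarily large sets $W$, and, for every bounded flip, short paths connecting most pairs of $W$.

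\item Apply Ramsey-type cleaning: indiscernible sequences plus iterated pigeonhole over the bounded number of flip partitions. This makes the connecting paths uniform, giving a grid-like family $\pi_{i,j}$ of paths of some length $r\le\rho$ between two indiscernible sequences $a_i$ and $b_j$.

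\item Classify the adjacency between distinct paths. By indiscernibility, the edges between $\pi_{i,j}$ and $\pi_{i',j'}$ depend only on the order type of $(i,j,i',j')$ and on the two positions along the paths. Every such ``cross edge'' type that cannot be removed by a bounded flip either shortcuts the paths, which lets us induct on $r$, or forms an ordered comparability structure, which yields large comparability grids. The types that can be flipped away leave exactly the star, clique or half-graph root patterns.
\end{enumerate}

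\textbf{Main obstacle.} I expect step 3 to be the hardest part, together with the induction on path length it requires. One has to show that every residual interaction pattern between indiscernible path families falls into one of the four canonical shapes after a bounded flip. My first attempt would be to minimize $r$ over all witnesses, so that any non-flippable cross edge contradicts minimality unless it is order-governed. I would then show that order-governed interactions on both coordinates assemble into a comparability grid.
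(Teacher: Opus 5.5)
The paper does not prove this statement. It quotes it from \cite{dreier2024flipbreakability} and uses it as a black box, and it needs only the direction ``monadically independent $\Rightarrow$ one of the four obstructions'' (in the proof of \Cref{thm:2wqo-dependent}). That is exactly the direction your proposal does not establish.

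\textbf{The converse direction.} Your treatment of obstructions $\Rightarrow$ monadic independence is essentially fine:
\begin{itemize}
\item flips are undone by colouring the flip partition;
\item marking the $\pi(i,j,1)$ encodes an arbitrary bipartite graph in star and clique crossings;
\item in half-graph crossings the order on roots is definable by neighbourhood inclusion, so $N(a_i)\setminus N(a_{i+1})$ recovers $\{\pi(i,j,1):j\in[n]\}$.
\end{itemize}
The one invalid step is for comparability grids. ``They transduce all grids and hence all graphs'' fails if grids means the planar grid graphs: these form a nowhere dense, hence monadically stable, class. Encode directly instead. Put $x_p:=a_{p,1}$ and $y_q:=a_{1,q}$ for $p,q\ge 2$. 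A vertex $a_{i,j}$ with $i,j\ge 2$ is adjacent to exactly the $x_p$ with $p\le i$ and the $y_q$ with $q\le j$. So mark the $a_{i,j}$ with $ij\in E(H)$ and use the successor on $\{x_p\}$ and on $\{y_q\}$, definable by neighbourhood inclusion into the cells. This recovers any bipartite graph $H$.

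\textbf{The needed direction has a genuine gap.} What you give there is a plan, not a proof.

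In Step~1, the equivalence of monadic dependence and flip-breakability is not a classical model-theoretic fact. It is one of the two main theorems of \cite{dreier2024flipbreakability} itself, and the half you need is supported only by a pointer to Shelah.

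More seriously, Step~2 fails as described. There is no bounded number of flip partitions to pigeonhole over: a graph has exponentially many $k$-flips. Non-breakability quantifies over all of them and only tells you that, in each particular flip, most pairs of $W$ are joined by \emph{some} short path. Those paths depend on the flip, may share vertices, may run through other elements of $W$, and may have arbitrary adjacencies to everything else. Indiscernibility or product Ramsey can make the adjacencies among objects you already have uniform, but it cannot create the objects. An $r$-crossing needs $n^2$ pairwise disjoint paths $\pi_{i,j}$, one for each pair $(a_i,b_j)$, in a single graph, with no edges besides the prescribed ones after one bounded flip. Building such a family (for instance by choosing flips adaptively and arguing about shortest connections in them) is precisely the substantial content of the cited theorem.

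Step~3, which you already flag as the main obstacle, is the other half of the same missing argument. Two further problems there:
\begin{itemize}
\item The comparability grid must appear as an induced subgraph without any flip.
\item Your classification is off for clique crossings. The edges $\pi(i,j,1)\pi(i,j',1)$ are non-flippable cross edges between distinct paths, governed by equality of an index. They are not a feature of how the roots attach.
\end{itemize}
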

\end{samepage}

The forbidden induced subgraph characterization of monadic dependence 
suggests a clear path towards proving \Cref{thm:2wqo-dependent}. 
We just have to show that none of the four forbidden families is pattern-free.


\begin{proof}[Proof of \Cref{thm:2wqo-dependent}]
    We show the contrapositive: if $\CC$ is not monadically dependent, then $\CC$ is not pattern-free. 
    By \Cref{thm:dependent-forbidden-patterns}, the class of $k$-flips from $\CC$ contains as induced subgraphs all star/clique/half-graph $r$-crossing for some $r \geq 1$, or $\CC$ contains every comparability grid as an induced subgraph.
    Since the hereditary closure of a pattern-free class is pattern-free, 
    and we can apply \Cref{lem:flip-pattern} to undo the flips,    
    it is sufficient to prove that each of the following four classes is not pattern-free:
    \begin{enumerate}
        \begin{multicols}{2}
        \item the class of all star $r$-crossings,
        \item the class of all clique $r$-crossings,
        \item the class of all half-graph $r$-crossings,
        \item the class of all comparability grids.
        \end{multicols}
    \end{enumerate}
    The classes of star and clique crossings are known to contain all paths as induced subgraphs~\cite[Lem.\ 21]{mahlmann2025forbidden}. This also holds for the class of comparability grids.
    To give an intuition\footnote{The only case that is not obvious from the figure is the class of clique $1$-crossings. As noted in \cite{mahlmann2025forbidden}, the hereditary closure of this class contains all \emph{rook graphs}. A layout showing that rook graphs contain long paths is given in \cite[Fig.\ 5]{mahlmann2025forbidden}.}, we marked long paths for each of these three cases in \Cref{fig:dependent-patterns}.
    The conclusion follows from \Cref{lem:paths-not-pattern-free}.

    The last remaining case (half-graph $r$-crossings) is handled by the following claim.

    \begin{restatable}{claim}{lemhgc}\label{lem:hgc}
        For every $r \geq 1$, the half-graph $r$-crossing of order $2mh$ contains an $(m,h)$-pattern.
    \end{restatable}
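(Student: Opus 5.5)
I would find the pattern as a long ``zigzag'' walk that alternates between $a$-roots and $b$-roots and runs along whole paths $\pi_{x,y}$ in between. Each layer of the pattern will be either a set of $m$ roots of the same type, or the set of $t$-th vertices of $m$ distinct paths for a fixed $t$. The order $n=2mh$ is used to split the root indices into disjoint blocks of size $m$: $X_1,X_2,\dots\subseteq[n]$ for the $a$-side and $Y_1,Y_2,\dots\subseteq[n]$ for the $b$-side. Since the walk has at most $h$ layers, at most $h$ blocks of each kind are ever needed, and $2mh$ indices suffice comfortably.

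\textbf{Construction.} Write $X_s=\{x^s_1<\dots<x^s_m\}$ and $Y_s=\{y^s_1<\dots<y^s_m\}$. The sequence of layers, indexed by $k\in[m]$ inside each layer, is as follows.
\begin{enumerate}
\item An $a$-layer $\{a_{x^1_k}\}$.
\item The path layers $\{\pi(x^1_k,y^1_k,t)\}$ for $t=1,\dots,r$.
\item A $b$-layer $\{b_{y^1_k}\}$.
\item The path layers $\{\pi(x^2_k,y^1_k,t)\}$ for $t=r,\dots,1$, traversing the paths backwards.
\item An $a$-layer $\{a_{x^2_k}\}$.
\item The path layers of $\pi_{x^2_k,y^2_k}$ forwards, then $\{b_{y^2_k}\}$, and so on.
\end{enumerate}
I stop after exactly $h$ layers. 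A prefix of consecutive layers of a pattern-shaped sequence is again a pattern, so truncating is harmless. All paths used are distinct because their index pairs $(x,y)$ are distinct.

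\textbf{Verification, in order.}
\begin{itemize}
\item \emph{Layers are independent sets.} Roots are pairwise non-adjacent, and the $t$-th vertices of distinct paths are non-adjacent.
\item \emph{Consecutive path layers.} Two consecutive path layers lie on the same $m$ paths, so their adjacency is governed by $=$.
\item \emph{An $a$-layer and its neighbouring start layers.} Take $\{a_{x^s_k}\}$ and a start layer $\{\pi(x^s_{k'},y,1)\}$. Here $a_{x^s_k}$ is adjacent to $\pi(x^s_{k'},y,1)$ iff $x^s_k\le x^s_{k'}$, i.e.\ iff $k\le k'$, so the governing relation is $\le$.
\item \emph{A $b$-layer and its neighbouring end layers.} Symmetrically, $b_{y^s_k}$ is adjacent to $\pi(x,y^s_{k'},r)$ iff $y^s_k\le y^s_{k'}$, i.e.\ iff $k\le k'$, again giving $\le$.
\item \emph{Case $r=1$.} Start and end vertices coincide, and the same two computations still apply, to different neighbours.
\end{itemize}

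\textbf{Main obstacle: non-consecutive layers.} The remaining step is to show that every pair of non-consecutive layers is either fully adjacent or fully non-adjacent.
\begin{itemize}
\item Two root layers are always non-adjacent.
\item Path layers on different paths are non-adjacent. Path layers on the same paths at distance at least $2$ along those paths are also non-adjacent.
\item A root layer is non-adjacent to every path layer with $1<t<r$.
\item The delicate case is an $a$-layer $\{a_{x^s_k}\}$ against a start layer $\{\pi(x^{s'}_{k'},y,1)\}$ with $s'\neq s$. Every element of $X_s$ lies below every element of $X_{s'}$, or every element lies above. Hence the adjacency $x^s_k\le x^{s'}_{k'}$ holds for all pairs $k,k'$ or for none, so the two layers are fully adjacent or fully non-adjacent.
\item The same argument applies to $b$-layers against end layers built from a different block $Y_{s'}$.
\end{itemize}
This is exactly why fresh, disjoint blocks are used in each round, and why any relative ordering of the blocks works. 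With these checks, the first $h$ layers form an $(m,h)$-pattern that is an induced subgraph of the half-graph $r$-crossing of order $2mh$.
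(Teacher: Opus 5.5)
Your proposal is correct and is essentially the paper's proof: it uses the same zigzag sequence $A_1,M^1_1,\dots,M^r_1,B_1,N^r_1,\dots,N^1_1,A_2,\dots$, truncated to $h$ layers, and the same key observation that disjoint index blocks make every non-consecutive root-versus-path adjacency homogeneous. The differences are cosmetic: the paper fixes the interleaved block order $I_1<J_1<I_2<J_2<\cdots$ and checks the four root-versus-start-layer cases one by one, whereas you note that any arrangement of blocks works, provided the blocks are intervals (you should state this explicitly). The pairs you leave out, such as an $a$-layer against an end layer when $r>1$, are trivially non-adjacent.
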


    \begin{claimproof}[Proof sketch]
        We can find a large pattern inside the half-graph crossings by ``zigzagging'' through it.
        Such an embedding is illustrated in \Cref{fig:hgc-pattern};
        note that it actually exhibits a pattern with even more layers.
        A~formal proof is given in \Cref{sec:appendix-hgc}.
    \end{claimproof}
    This finishes the proof of \Cref{thm:2wqo-dependent}.
\end{proof}

\newcommand{\hgcpathsnippet}[2]{%
    \tikz[baseline=-0.7ex, x=0.42em, y=0.42em]{
        \draw[line width=1.2pt, line join=round, #1] (0,0)--(4,0);
    }%
}
\newcommand{\hgcbluepath}{\hgcpathsnippet{blue!70!white}{}}
\newcommand{\hgcredpath}{\hgcpathsnippet{red!70!white, dash pattern=on 3pt off 2pt}{}}
\newcommand{\hgcvioletpath}{\hgcpathsnippet{violet!70!white, dash pattern=on 0.8pt off 1.2pt}{}}

\begin{figure}[!ht]
    \centering
    \input{fig-hgc}
    \vspace{-\bigskipamount}
    \caption[Pattern embedded in a half-graph crossing]{\emph{Top}: a $(3,7)$-pattern embedded in the half-graph $2$-crossing of order~$9$.
    Vertices of the pattern in the same layer are marked with the same shape.
    Vertices of the pattern with the same first coordinate are joined by one of the paths
    \protect\hgcbluepath, \protect\hgcredpath, or \protect\hgcvioletpath.
    In larger half-graph crossings, the pattern can be continued to the right by going upwards again, i.e., $a_7,a_8,a_9$ would next be routed to $b_{10},b_{11},b_{12}$, and so on.
    \emph{Bottom}: an ``unfolding'' of the induced pattern. In this layout it is easy to see that the highlighted induced subgraph is indeed a pattern.
    }
    \label{fig:hgc-pattern}
\end{figure}

\section{Bounding the rank-width}
\label{sec:final}
In this section we prove our main theorem: every hereditary $2$-WQO graph class has bounded rank-width (equivalently: bounded clique-width).
We have previously established that hereditary $2$-WQO classes are pattern-free, and pattern-free classes are monadically dependent.
For the proof of our main theorem, we distill the relevant structure of pattern-free classes derived from monadic dependence into the following Ramsey-theoretic lemma, whose proof is sketched in \cref{sec:proofsketchpartition}. The full proof can be found in \Cref{sec:appendix-partition}.

\begin{samepage}
\begin{restatable}[Separator Lemma]{lemma}{lemPartition}\label{lem:partition}
    For every pattern-free graph class $\CC$, there are $f \from \N \to \N$ and $k \in \N$ such that for every graph $G \in \CC$ and set $X\subseteq V(G)$ of size at least $f(m)$ there exists partition $P_1,\ldots,P_m$ of $V(G)$ with $m$ parts such that:
    \begin{enumerate}
        \item $X$ intersects each of the sets $P_1,\ldots,P_m$,
        \item for every $1 \leq \alpha \leq m$:
        \[
            \rk_G( L, R) \leq k,
        \]
        where $L := P_{1} \cup \ldots \cup P_{\alpha-1}$ and $R := P_{\alpha+1} \cup \ldots \cup P_{m}$.
    \end{enumerate}
\end{restatable}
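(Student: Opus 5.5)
By \Cref{thm:2wqo-dependent}, $\CC$ is monadically dependent, so the \emph{insulation property} of Dreier, Mählmann and Toruńczyk~\cite{dreier2024flipbreakability} is available. I will use it in roughly the following form. There are constants $k_0$ and $r_0$ and a function $g$ such that for every $G\in\CC$ and every set $X$ of size at least $g(m)$ we can find a subsequence $x_1,\dots,x_m$ of $X$, a $k_0$-flip $H$ of $G$, and an \emph{insulator} for it. The insulator is a family of nested sets (``layers'') of height $r_0$ with the following two properties. The $x_i$ lie in pairwise distinct cells $C_1,\dots,C_m$, arranged linearly. In $H$, a vertex in cell $C_i$ is adjacent to a vertex in cell $C_j$ with $|i-j|\ge 2$ only through a bounded number of ``connector'' layers, and these layers see the cells in a prescribed, order-compatible way. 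The plan is to turn this cell structure into the partition $P_1,\dots,P_m$. Each vertex of $G$ is assigned to a part according to the cell it belongs to, and vertices outside all cells are placed in the part indexed by (say) the smallest cell they touch in the insulator.

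The core step is a \emph{bounded-height} version of insulation. In general monadically dependent classes the insulator may have unbounded depth, or the interaction between cells may be a half-graph-like order. That kind of interaction yields unbounded rank across a cut $L\mid P_\alpha\mid R$. So I will first show that pattern-freeness forces the insulator to have height bounded in terms of the pattern parameters $(m_0,r_0)$. I will also show that adjacencies between non-consecutive cells can be removed by a bounded flip. The argument is by contradiction. If some vertex sequence needs a long chain of layers to connect $x_i$ to $x_j$, then picking one vertex per layer for many indices $i$ gives, after Ramsey on the pairwise types, a $(m_0,r_0)$-pattern in some $k$-flip of $G$. Here consecutive layers are related by $=,\neq,\le,\ge,<,>$, since these are the only order-invariant binary relations on indices, and non-consecutive layers are homogeneous by insulation. \Cref{lem:flip-pattern} then lifts this pattern back to $G$, contradicting pattern-freeness. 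I expect this step to be the main obstacle. It requires carefully matching the abstract insulator definition from~\cite{dreier2024flipbreakability} to the layered structure of \Cref{def:pattern}, and in particular arguing that the long-range homogeneity is guaranteed.

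Given bounded-height insulation, the rank bound is routine. Fix $\alpha$, with $L=P_1\cup\dots\cup P_{\alpha-1}$ and $R=P_{\alpha+1}\cup\dots\cup P_m$. In $H$, every edge between $L$ and $R$ passes through one of boundedly many layers. The adjacency between $L$ and $R$ in $H$ is determined by the ($\le r_0$) layer types of the endpoints, so the $L\times R$ adjacency matrix of $H$ has at most $c(r_0)$ distinct rows, up to a bounded number of types. Hence $\rk_H(L,R)\le c$. Since $G$ differs from $H$ by a $k_0$-flip, and a $k_0$-flip changes the matrix by a sum of at most $k_0^2$ rank-one block matrices, we get $\rk_G(L,R)\le c+k_0^2=:k$. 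Finally, $X$ meets every $P_i$ because $x_i\in P_i$. Setting $f(m):=g(m)$, up to the Ramsey blow-ups used above, gives the lemma.
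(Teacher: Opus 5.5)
\textbf{There is a genuine gap.} It lies in how you turn the insulator into the partition, and this is where the paper needs its main extra idea.

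An insulator (\Cref{def:insulator}) says nothing about three kinds of edges: edges among vertices outside the grid, edges between outside vertices and the boundary (first column, last column, top row), and edges between close columns. Property \ref{itm:outside} only makes an outside vertex homogeneous, per color class, to $\int(A)$, and cells have unbounded size. This breaks your construction in three ways.
\begin{enumerate}
\item With your rule, outside vertices assigned to $P_1$ and to $P_m$ can be adjacent arbitrarily, for example via a large induced matching.
\item Even if all outside vertices go to $P_1$, for every $1<\alpha<m$ the set $R$ contains the last column (and any top-row vertices). The outside vertices in $L$ see these arbitrarily.
\item Moving the last column into $P_1$ does not help, because it is then close to a column of $R$.
\end{enumerate}
So no linear partition with one column per part gives $\rk_G(L,R)\le k$.

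The paper fixes this in two steps.
\begin{enumerate}
\item Pattern-freeness gives $2m$ consecutive columns with empty top cells (\Cref{lem:bounded-height,lem:empty-top-insulator}). Hence any block $M$ of these columns strictly between $Q_\alpha$ and $Q_\beta$ lies in $\int(A)$. Every vertex of $L\uplus R$ is then homogeneous to $M\cap C\cap A[*,r]$ for each color $C$ and row $r$. For outside vertices this is \ref{itm:outside}. Grid vertices lie entirely to the left or right of $M$ in a non-close column, so \ref{itm:adj-different-rows}--\ref{itm:adj-right} apply uniformly. This is the Double Separator Lemma.
\item It then folds, setting $P_i:=Q_i\cup Q_{2m+1-i}$. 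Now $P_1$ absorbs all outside and boundary vertices. For $\alpha\ge 2$, $R$ is a contiguous block of interior columns, and $P_\alpha$ shields it from both sides.
\end{enumerate}
Nothing in your proposal plays the role of this two-sided separator.

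Your ``core step'' is also aimed at the wrong target.
\begin{itemize}
\item You choose the height of an insulator yourself, since insulators exist for every fixed height. Pattern-freeness does not bound the height. What it buys is that at height $h_0+1$ fewer than $2q^{h_0}s+2$ columns reach the top row, by following the downward edges of \ref{itm:rootedness}.
\item The order-dependent adjacencies of \ref{itm:adj-left}/\ref{itm:adj-right} are harmless, contrary to your claim. The adjacency of a vertex to a block lying entirely on one side of it depends only on colors, so rank across order-compatible cuts stays bounded.
\item Trying to eliminate these adjacencies by a bounded flip is unnecessary, and your contradiction argument cannot deliver it. A bounded flip of a large half-graph still contains a large half-graph. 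Yet half-graphs occur in pattern-free classes: bipartite chain graphs are $2$-WQO by Higman's lemma, hence pattern-free. So no pattern arises from such adjacencies.
\item Ramsey on pairwise types alone may yield a complete or empty relation between consecutive layers, which \Cref{def:pattern} forbids. The relations $=,\neq,\le,\ge,<,>$ are not the only order-invariant ones. The paper rules this out because the downward edge in $G\oplus F$, together with \ref{itm:adj-bot-left}, makes the case $i=i'$ the negation of the case $i<i'$.
\end{itemize}
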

\end{samepage}

Informally, the lemma states that in any pattern-free graph class, a large set $X$ can always be ``spread out'' across a partition of the vertices 
into ordered parts $P_1,\ldots,P_m$, such that each part $P_\alpha$ acts as low-rank separator between the parts $P_1,\ldots,P_{\alpha-1}$ left of it and the parts $P_{\alpha + 1},\ldots,P_m$ right of it.
The rank bound~$k$ is a constant depending only on the graph class, not on the graph or the size of $X$. The proof of \Cref{lem:partition} is deferred to \Cref{sec:proofsketchpartition}. We now show how to use it to prove the main result.

We use one more lemma, which is a standard WQO argument,
whose proof is deferred to \Cref{sec:appendix-wl-bounded}.

\begin{restatable}{lemma}{lemWlBounded}\label{lem:wl-bounded}
    For every hereditary $2$-WQO graph class $\CC$, there is a function $f:\N \to \N$ such that for every $k \in \N$ and graph $G \in \CC$, if $G$ contains a set $W \subseteq V(G)$ of size $k$ that is well-linked in $G$, then $G$ contains an induced subgraph $H$ whose vertex set has size at most $f(k)$ and $W \subseteq V(H)$ is also a well-linked set in $H$.
\end{restatable}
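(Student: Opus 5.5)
The plan is a minimal-elements argument: show that, among $2$-labelled graphs, having a well-linked set of size $k$ is upward closed under label-preserving embeddings, and then use $2$-WQO to get only finitely many minimal witnesses.

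\textbf{Encoding.} Fix $k$. For $G\in\CC$ and $W\subseteq V(G)$, let $(G,W)$ denote the $2$-labelled graph in which $W$ gets label $1$ and every other vertex gets label $2$. Let $\mathcal{Q}_k$ be the set of all such $(G,W)$ with $G\in\CC$, $|W|=k$ and $W$ well-linked in $G$. We order $\mathcal{Q}_k$ by label-preserving induced embeddings. Since $\CC$ is $2$-WQO, this order is a WQO on $\mathcal{Q}_k$.

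\textbf{Step 1: well-linkedness is preserved upwards.} Let $H$ be an induced subgraph of $H'$ and let $W\subseteq V(H)$ be well-linked in $H$. We show $W$ is well-linked in $H'$.
\begin{itemize}
\item Take a bipartition $X,Y$ of $W$ and a set $Z$ with $X\subseteq Z\subseteq V(H')-Y$.
\item Put $Z_H := Z\cap V(H)$. Then $X\subseteq Z_H\subseteq V(H)-Y$.
\item The adjacency matrix between $Z_H$ and $V(H)-Z_H$ is a submatrix of the one between $Z$ and $V(H')-Z$. Hence $\rk_{H'}(Z)\ge \rk_H(Z_H)\ge\min(|X|,|Y|)$.
\end{itemize}
Now suppose $\phi$ is a label-preserving embedding of $(H_0,W_0)\in\mathcal{Q}_k$ into $(G,W)$ with $|W|=k$. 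Because $|W_0|=|W|=k$ and labels are preserved, $\phi$ maps $W_0$ bijectively onto $W$. The image $\phi(V(H_0))$ induces a copy of $H_0$ in which $W$ is well-linked. By the observation above, $W$ is then well-linked in $G$.

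\textbf{Step 2: finitely many minimal elements.} Call $(M,W_M)\in\mathcal{Q}_k$ minimal if no element of $\mathcal{Q}_k$ with strictly fewer vertices embeds into it.
\begin{itemize}
\item Distinct minimal elements are pairwise incomparable up to isomorphism: an embedding between two minimal elements forces equal vertex counts, so it is an isomorphism.
\item Hence the minimal elements form an antichain, and by $2$-WQO there are only finitely many of them up to isomorphism.
\item Every element of $\mathcal{Q}_k$ lies above some minimal element. This follows by repeatedly passing to smaller elements that embed into it, which must stop because vertex counts are finite.
\end{itemize}
Define $f(k)$ as the maximum number of vertices of a minimal element of $\mathcal{Q}_k$.

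\textbf{Step 3: conclusion.} Let $G\in\CC$ contain a well-linked set $W$ with $|W|=k$, so $(G,W)\in\mathcal{Q}_k$.
\begin{itemize}
\item Pick a minimal $(M,W_M)$ with a label-preserving embedding $\phi$ into $(G,W)$.
\item Let $H:=G[\phi(V(M))]$. Then $H\in\CC$ because $\CC$ is hereditary, and $|V(H)|\le f(k)$.
\item Since $\phi$ maps $W_M$ onto $W$, we have $W\subseteq V(H)$.
\item $\phi$ is an isomorphism from $M$ to $H$ carrying $W_M$ to $W$, so $W$ is well-linked in $H$.
\end{itemize}

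The only step needing genuine care is Step 1, namely the submatrix monotonicity of cut-rank together with the bijectivity of $\phi$ on the label-$1$ vertices. The rest is routine.
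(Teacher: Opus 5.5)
Your proof is correct and is essentially the paper's argument: you label $W$ by $1$, use $2$-WQO to bound the size of minimal witnesses, and then take a minimal witness embedded in $G$. Your antichain-of-minimal-elements phrasing is equivalent to the paper's contradiction from an infinite sequence of $k$-minimal pairs of increasing size. One small remark: your Step~1 (upward closure of well-linkedness) is correct but never used, since Steps~2 and~3 only need that embeddings compose within $\mathcal{Q}_k$ and that $\phi$ is an isomorphism onto its image.
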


We are finally ready to prove the main result of the paper.
We use the small size of $H$ obtained in the lemma above to argue that one of the parts from the partition in \Cref{lem:partition} is disjoint from $H$, effectively separating it.

\begin{theorem}
    Every hereditary $2$-WQO graph class $\CC$ has bounded rank-width.
\end{theorem}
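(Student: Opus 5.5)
We argue by contradiction. Suppose $\CC$ is hereditary and $2$-WQO but has unbounded rank-width. By \Cref{lem:2wqo-pattern-free}, $\CC$ is pattern-free, so the Separator Lemma (\Cref{lem:partition}) applies; it gives a function $f_{\mathrm{sep}}$ and a constant $k$. Let $f_{\mathrm{wl}}$ be the function from \Cref{lem:wl-bounded}. We fix a large integer $w$, to be chosen in terms of $k$, and set $m := f_{\mathrm{wl}}(w)+3$.

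\textbf{Setting up the structures.} Since the rank-width of $\CC$ is unbounded, \Cref{fact:well-linked} yields a graph $G\in\CC$ containing a well-linked set of size at least $\max(w, f_{\mathrm{sep}}(m))$. We use this set twice. First, take a subset $W$ of size exactly $w$; it is well-linked by \Cref{obs:well-linked-hereditary}. Applying \Cref{lem:wl-bounded}, we obtain an induced subgraph $H$ with $W\subseteq V(H)$, $|V(H)|\le f_{\mathrm{wl}}(w)$, and $W$ well-linked in $H$. Second, apply \Cref{lem:partition} to $H$ instead of $G$; this is allowed because $H\in\CC$ by heredity. We take $X$ to be a set of size at least $f_{\mathrm{sep}}(m)$ that contains $W$.

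\textbf{The main obstacle.} The difficulty is sizing the parameters consistently. The set $X$ must lie inside $H$, but $H$ has at most $f_{\mathrm{wl}}(w)$ vertices, while $X$ must have size $f_{\mathrm{sep}}(m)$ and meet all $m$ parts. To avoid this circularity, I would change the choice of $m$: take $m$ just large enough to pigeonhole $W$, say $m := 2w+3$. Then apply \Cref{lem:partition} to $H$ with $X := V(H)$, first enlarging $W$ (and hence $H$) so that $|V(H)|\ge f_{\mathrm{sep}}(m)$; enlarging is harmless because \Cref{lem:wl-bounded} only bounds $|V(H)|$ from above, and we may pass to a larger induced subgraph in which $W$ remains well-linked. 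Alternatively, and more simply, apply \Cref{lem:partition} directly in $G$, taking $X\supseteq W$ to be a large well-linked set, which has size at least $f_{\mathrm{sep}}(m)$.

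\textbf{Intended argument.} The parts $P_1,\dots,P_m$ partition $V(G)$, and each meets $X$. Since $H$ is small ($|V(H)|\le f_{\mathrm{wl}}(w) < m-2$), some middle part $P_\alpha$ with $1<\alpha<m$ is disjoint from $V(H)$. Let $L$ and $R$ be the parts to the left and right of $P_\alpha$. Then $V(H)$ splits as $(V(H)\cap L)\cup(V(H)\cap R)$, and
\[
\rk_H(V(H)\cap L)=\rk_G(V(H)\cap L,\,V(H)\cap R)\le \rk_G(L,R)\le k.
\]
Well-linkedness of $W$ in $H$ then forces $\min(|W\cap L|,|W\cap R|)\le k$. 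To get a contradiction we need to choose $\alpha$ so that both sides contain more than $k$ vertices of $W$.

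This is the crux. The parts avoiding $H$ can only be guaranteed by counting: at most $|V(H)|$ parts meet $H$. But $W$ need not be spread across the parts. Here $X$ should be chosen as $W$ together with well-linked extra points, and the partition must put balanced amounts of $W$ on both sides of some $H$-free part. My plan is to apply \Cref{lem:partition} with $X := W$ and $m := f_{\mathrm{wl}}(w)+2$, requiring $w \ge f_{\mathrm{sep}}(m)$. This demands $w\ge f_{\mathrm{sep}}(f_{\mathrm{wl}}(w)+2)$, which is circular. To break the circularity, I would instead take $X$ to be a large well-linked set, with $W\subseteq X$ chosen after the partition: select exactly one vertex of $X$ from each of $2t$ consecutive blocks of parts. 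Then apply \Cref{lem:wl-bounded} to this $W$ of size $2t$, obtaining $H$ of size at most $f_{\mathrm{wl}}(2t)$. Since $W$ is spread evenly across the parts by construction, arranging the blocks to be separated by more than $f_{\mathrm{wl}}(2t)$ parts ensures that some part between the middle two chosen vertices misses $H$. That part leaves $t>k$ vertices of $W$ on each side, which contradicts the rank bound. The parameters are then chosen in order: $t:=k+1$, then $m := 2t\,(f_{\mathrm{wl}}(2t)+1)$, then the well-linked set of size at least $f_{\mathrm{sep}}(m)$.
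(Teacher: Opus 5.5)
Your final argument is correct and is essentially the paper's proof. You apply the Separator Lemma to a large well-linked set, pick $2t=2(k+1)$ of its vertices from distinct parts with more than $f_{\mathrm{wl}}(2t)$ parts between the $t$-th and $(t+1)$-th pick, shrink to $H$ via \Cref{lem:wl-bounded}, and use a part missing $V(H)$ to bound $\rk_H$ by $k$. The earlier detours (applying the Separator Lemma inside $H$, enlarging $H$) are unnecessary, and since a single gap suffices, the paper takes only $m = 2t + f_{\mathrm{wl}}(2t) + 1$ parts rather than your $2t(f_{\mathrm{wl}}(2t)+1)$.
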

\begin{proof}
    By \Cref{lem:2wqo-pattern-free}, $\CC$ is pattern-free.
    Let $f_\mathsf{part}$ and $k$ be as given by \Cref{lem:partition} for $\CC$,
    and let $f_\mathsf{link}$ be as given by \Cref{lem:wl-bounded}.
    Set $s := k + 1$, $\ell := f_\mathsf{link}(2s)$, and $m := 2s + \ell + 1$.
    We claim that every graph in $\CC$ has rank-width at most $f_\mathsf{part}(m)$.

    Suppose for contradiction that some $G \in \CC$ has rank-width greater than $f_\mathsf{part}(m)$.
    By \Cref{fact:well-linked}, $G$ contains a well-linked set $W$ of size $f_\mathsf{part}(m)$.
    By \Cref{lem:partition}, there is a partition $P_1, \ldots, P_m$ of $V(G)$
    with $P_i \cap W \ne \emptyset$ for each~$i$, and such that for all $1 \le \alpha \le m$,
    \[
        \rk_G\left(L_\alpha, R_\alpha\right) \le k,
    \]
    where $L_\alpha := P_1 \cup \ldots \cup P_{\alpha-1}$ and  $R_\alpha := P_{\alpha+1} \cup \ldots \cup P_m$.

    Pick a sequence $w_1,\ldots, w_m$ satisfying $w_i \in P_i \cap W$ for each $i \in [m]$, and let $W_1$ and $W_2$
    be the first and last $s$ elements of this sequence, respectively.
    Since every subset of a well-linked set is again well-linked~(\Cref{obs:well-linked-hereditary}),
    $W_1 \cup W_2 \subseteq W$ is well-linked in~$G$.
    By \Cref{lem:wl-bounded}, there is an induced subgraph $H$ of $G$
    with $|V(H)| \le \ell$ such that $W_1 \cup W_2$ is contained in $V(H)$ and well-linked in~$H$.

    The parts $P_{s+1}, \ldots, P_{s+\ell+1}$ lie between $W_1$ and~$W_2$ (see \cref{fig:bounding-rankwidth}).
    Since there are $\ell + 1 > |V(H)|$ such parts,
    by pigeonhole some $P_\alpha$ among them satisfies $P_\alpha \cap V(H) = \emptyset$.

    Set $L := L_\alpha$ and $R:= R_\alpha$.
    By the choice of indices, $W_1 \subseteq L$ and $W_2 \subseteq R$.
    Set $Z := L \cap V(H)$.
    Since $P_\alpha \cap V(H) = \emptyset$,
    we have $V(H) \setminus Z = R \cap V(H)$, so
    \[
        \rk_H(Z)
        = \rk_H\!\big(L \cap V(H),\; R \cap V(H)\big)
        \le \rk_G\!\left(L,\; R\right)
        \le k,
    \]
    where the first inequality holds because the adjacency matrix of $H$ between these two sets is a submatrix of the corresponding matrix of $G$.
    On the other hand, $W_1 \subseteq Z \subseteq V(H) \setminus W_2$,
    so the well-linkedness of $W_1 \cup W_2$ in $H$ applied to the bipartition $(W_1, W_2)$ gives the following contradiction
    \[
        k\ge \rk_H(Z) \ge \min(|W_1|, |W_2|) = s = k + 1.\qedhere
    \]
\end{proof}

\vspace{-1em}
\begin{figure}[h!]
\centering
\includegraphics[scale = 0.85]{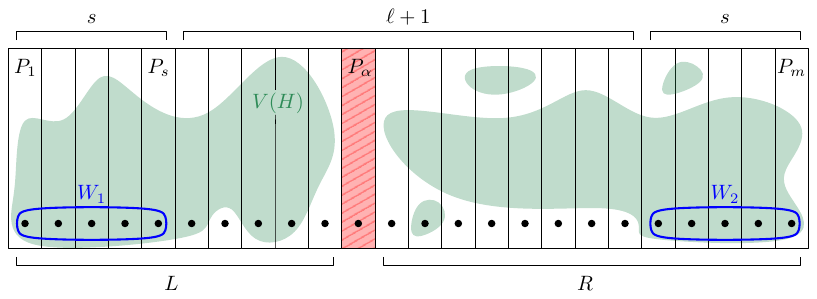}
\vspace{-1em}
\caption{The partition $P_1,\ldots,P_m$ into $m = 2s + \ell + 1$ parts.
    The sets $W_1$ and $W_2$ each have size~$s$.
    At least one part $P_\alpha$ is disjoint from $V(H)$,
    yielding the decomposition into $L$ and $R$ with $W_1 \subseteq L$ and $W_2 \subseteq R$.}
\label{fig:bounding-rankwidth}
\end{figure}

\section{Proof sketch of the Separator Lemma}\label{sec:proofsketchpartition}
In this section, we sketch the proof of the Separator Lemma,  which we restate here for convenience.

\lemPartition*

Instead of proving the Separator Lemma directly, we will derive it from the \emph{Double Separator Lemma} (\Cref{lem:partitionDouble}), which we obtain by replacing the second condition of the Separator Lemma with:

\emph{\begin{enumerate}
    \item[2.] for all $1\leq \alpha < \beta \leq m$:
        \[
            \rk_G\left( M, L \uplus  R  \right) \leq k,
        \]
        where $L := P_{1} \cup \ldots \cup P_{\alpha-1}$,  $M := P_{\alpha+1} \cup \ldots \cup P_{\beta-1}$, and $R :=  P_{\beta+1} \cup \ldots \cup P_m$.
\end{enumerate}
}

In this modified version we demand that any two parts $P_\alpha$ and $P_\beta$ act as a low rank separator between the parts between them (i.e., the middle $M$) and the rest of the parts (i.e., the left and right parts $L \uplus R$).
The Double Separator Lemma easily implies the Separator Lemma.

\begin{proof}[Proof of the Separator Lemma using the Double Separator Lemma]
    See \Cref{fig:partition-wrap} for an illustration.
    Applying the Double Separator Lemma for $2m$ yields a partition $Q_1,\ldots, Q_{2m}$.
    We obtain the partition $P_1,\ldots, P_m$ for the Separator Lemma by ``folding'' the previous partition setting $P_i := Q_i \cup Q_{2m+1-i}$.
    We can now deduce the separation property for each $P_\alpha$ from the Double Separation Lemma for $Q_\alpha$ and $Q_\beta$ with $\beta := 2m+1-\alpha$.
\end{proof}

\begin{figure}[htbp]
    \centering
    \includegraphics{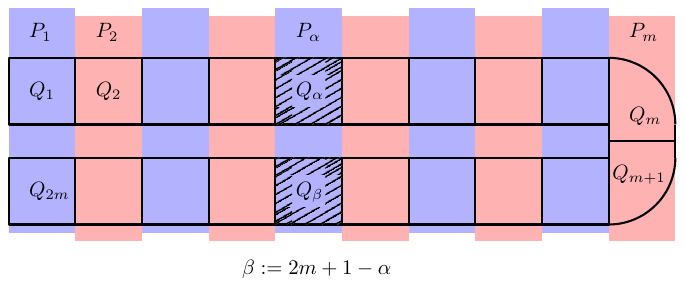}
    \caption{Visual proof of the Separator Lemma using the Double Separator Lemma.}
    \label{fig:partition-wrap}
\end{figure}

Hence, it only remains to prove the Double Separator Lemma.
We give a sketch of the proof here and defer the full proof to \Cref{sec:appendix-partition}.

\begin{figure}[htbp]
    \centering
    \includegraphics[width = 0.8\linewidth]{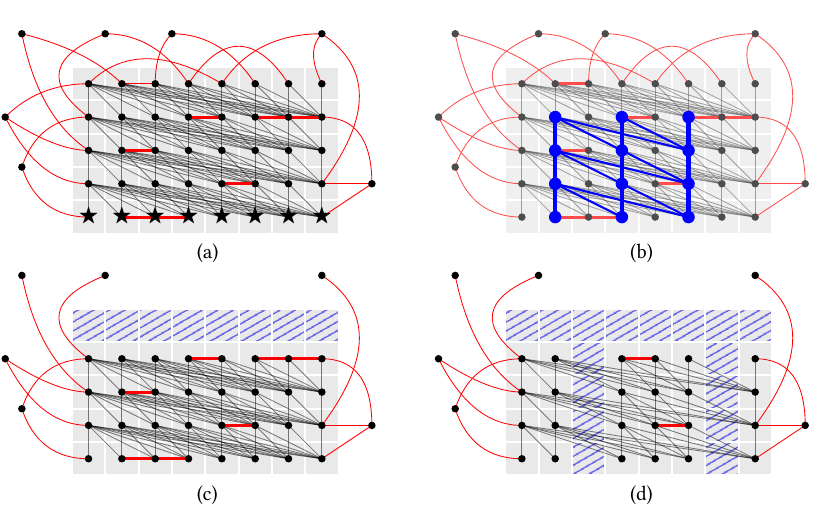}
    \caption{An illustration of the proof of \Cref{lem:partition}.
    Each panel shows the grid layout of an insulator.
    The vertices in the bottom row are the vertices of $A$ (marked with stars in the first panel). The height $h$ is the number of rows.
    }
    \label{fig:partition-sketch}
\end{figure}

\begin{proof}[Proof sketch of the Double Separator Lemma]
Fix a large vertex set $X \subseteq V(G)$ as in the statement of the lemma.
We have previously established that pattern-free classes are monadically dependent (\Cref{thm:2wqo-dependent}).
Hence, we can make use of the \emph{insulation property}~\cite{dreier2024flipbreakability}, a structural characterization of monadic dependence.
The insulation property guarantees that for every fixed $h\in\N$, we can find an \emph{insulator} of \emph{height} $h$ guarding a large subset of $A \subseteq X$. (We will later choose $h$ depending on the size of the forbidden pattern.)

An insulator is an $O_h(1)$-flip $H$ of the graph $G$, in which the neighborhood of the set $A$ is well-controlled.
An example is depicted in \Cref{fig:partition-sketch} (a).
The vertices of $A$ are all marked with a star symbol. In the insulator, the neighborhood of $A$ is organized in a grid like fashion.
Each column of the grid contains a vertex of $A$ in its bottom cell, and the number of rows of the grid is equal to the height $h$ of the insulator.
The union of the leftmost column, the rightmost column, and the top row of the grid form its \emph{boundary}, while the rest of the grid is called the \emph{interior}.
The insulator controls the interior of the grid as follows. In the graph $H$:
\begin{enumerate}
    \item No vertex outside the grid is adjacent to any interior vertex.
    \item Interior vertices in non-consecutive rows are non-adjacent.
    \item There is an $O_h(1)$-coloring of $V(H)$ such that adjacency between two interior vertices $u$ and $v$ in non-consecutive columns depends only on their colors and on the left-to-right order of $u$ and $v$ in the grid.
    \item Every interior vertex has
    \begin{itemize}
        \item at least one neighbor in the cell directly below it, and
        \item no neighbors in the row below that are strictly to its left.
    \end{itemize}
\end{enumerate}
A complete formal definition of an insulator is given in \Cref{def:insulator} in the appendix.
Crucially the insulator controls all edges except:
\begin{itemize}
    \item edges from outside vertices towards the boundary, and
    \item edges between vertices in neighboring cells.
\end{itemize}
We have marked such \emph{exceptional edges} in red in \Cref{fig:partition-sketch} (a).
Intuitively, the greater we choose the height~$h$ of the insulator, the farther we push away the boundary from the set $A$, and the more control we gain.
(The cost of this is the rising complexity of the $O_h(1)$-flip $H$ and the $O_h(1)$-coloring.)

So far, we have just applied the insulation property.
The key insight is that inside an insulator of height~$h$, if many of its top cells are occupied, then we find an $(\ell,h - 1)$-pattern, where $\ell$ is proportional to the size of $A$. Such a pattern is highlighted in \Cref{fig:partition-sketch} (b).
The layers of the pattern embed into the rows of the insulator, and we choose the pattern in non-consecutive columns.
By pigeonholing over the parts of the $O_h(1)$-flip and over the $O_h(1)$ colors, we assure that each layer is a clique or an independent set.

We assume $A$ to be large and the graph class that we work in to be pattern-free.
Hence, we know that for some fixed $h \in \N$, we can build insulators in which most of the top cells must be empty. For this proof sketch we assume all the top cells are empty. (Otherwise, we have a large pattern, as argued above.)
This is depicted in \Cref{fig:partition-sketch} (c).
Now vertices from outside the grid can only connect to the leftmost and rightmost column of the insulator.
The interior columns $C_1,\ldots,C_m$ of this insulator form the partition $P_1,\ldots,P_m$ of $V(G)$ (each column forms its own part), where we put all the vertices from outside the interior into the part $P_1$.
By construction, each part contains a vertex from $X$.
Now if we ignore any two columns/parts $P_\alpha$ and $P_\beta$, this cuts out a middle part $M := P_{\alpha + 1} \cup \ldots \cup P_{\beta - 1}$ that is completely in the interior of the insulator and whose neighboring columns are exactly the ignored parts $P_\alpha$ and $P_\beta$.
Therefore, the adjacency of $L \uplus R := V(G) - (M \cup P_\alpha \cup P_\beta)$ towards $M$ is completely controlled by the insulator.
This is visualized in \Cref{fig:partition-sketch} (d), where no exceptional (red) edges run between the two sets.
This bounds the rank and finishes the sketch of the proof of \Cref{lem:partition}.
\end{proof}

\paragraph{AI Disclosure.}
We used Claude Code with Opus 4.6 and ChatGPT Pro 5.4  for proofreading, refactoring, improving the presentation of the text, and filling in details of proofs; all mathematical results are due to the authors.
The authors verified the correctness and originality of all content including references.

\bibliographystyle{plain}
\bibliography{ref}

\appendix
\section{Proof of \Cref{lem:flip-pattern}}\label{sec:appendix-flips}

\flippattern*

\begin{proof}
    Let $G = H \oplus F$ be a $k$-flip of a $(k^r \cdot m, r)$-pattern $H$,
    defined by a partition $\KK$ of $V(H)$ into at most $k$ parts.
    Assign to each index $i \in [k^r \cdot m]$ its \emph{color sequence}
    $\mathbf{C}(i) := (\KK((i,1)), \ldots, \KK((i,r))) \in \KK^r$.
    Since there are at most $k^r$ color sequences,
    by pigeonhole there exist $m$ indices $i_1 < \cdots < i_m$
    sharing the same sequence $(C_1, \ldots, C_r)$.

    Let $P$ be the subgraph of $G$ induced on $\{(i_\ell, j)\}$,
    with vertices relabeled $(\ell, j)$ order-preservingly.
    All three pattern conditions are satisfied:
    for each layer $j$, every vertex in $L_j(P)$ has color $C_j$,
    so the flip acts uniformly on every pair of vertices from the same layer
    and on every pair from non-consecutive layers,
    preserving the clique/independent-set and full-homogeneity conditions.
    For consecutive layers $j, j{+}1$, the flip uniformly complements
    (or preserves) the comparison relation $\sim_H$ governing the adjacency between $L_j(H)$ and $L_{j+1}(H)$.
    Since the relabeling is order-preserving and the negation of any relation in
    $\{=,\neq,\leq,\geq,<,>\}$ is again in that set, the result is still a comparison relation.
\end{proof}

\section{Proof of \Cref{lem:hgc}}\label{sec:appendix-hgc}

\lemhgc*

\begin{proof}
    Let $G$ be the half-graph $r$-crossing of order $2mh$ and fix its partition into slices $S_0,\ldots,S_{r+1}$.
    Let
    \[
        I_1<J_1<I_2<J_2<\dots<I_h<J_h
    \]
    be the partition of $[2mh]$ into $2h$ many intervals of size $m$.
    For each $j\in[h]$, fix the two unique order preserving bijections
    \[
        \alpha_j\colon [m]\to I_j, \qquad \beta_j\colon [m]\to J_j.
    \]

    Recall that in the half-graph $r$-crossing of order $n := 2mh$, the root vertices are called $a_1,\ldots,a_n$ and $b_1,\ldots,b_n$, and the $t$th vertex on the path connecting $a_i$ and $b_j$ is called $\pi(i,j,t)$~(cf.\ \Cref{fig:dependent-patterns}).

    For each $j\in[h]$ and $t\in[r]$, define
    \[
        M^t_j:=\{\pi(\alpha_j(i),\beta_j(i),t):i\in[m]\} \subseteq S_t.
    \]
    For each $j\in[h-1]$ and $t\in[r]$, define also
    \[
        N^t_j:=\{\pi(\alpha_{j+1}(i),\beta_j(i),t):i\in[m]\} \subseteq S_t.
    \]
    For each $j\in[h]$, define
    \[
        A_j:=\{a_{\alpha_j(i)}:i\in[m]\} \subseteq S_0,
        \quad
        B_j:=\{b_{\beta_j(i)}:i\in[m]\} \subseteq S_{r+1}.
    \]
    Consider the sequence
    \[
        A_1,M^1_1,\dots,M^r_1,B_1,N^r_1,\dots,N^1_1,A_2,M^1_2,\dots
    \]
    of layers.
    We denote the first $h$ elements of this sequence by
    \[
        L_1,L_2,L_3, \ldots, L_h.
    \]
    Let $H$ be the induced subgraph on the union of these $h$ layers.
    We claim that $H$ is an $(m,h)$-pattern. Let us check the required conditions.

    \paragraph{Condition 1: Each layer induces a clique or independent set.}
    Each constructed layer is fully contained in a single slice of $G$.
    By definition of a half-graph crossing, each slice forms an independent set.
    Thus, each layer induces an independent set.

    \paragraph{Condition 2: Non-consecutive layers are fully homogeneous.}
    Fix two non-consecutive layers $L_j$ and $L_{j'}$ with $j + 1 < j'$ contained in slices $S_t \supseteq L_j$ and $S_{t'} \supseteq L_{j'}$ of the crossing.

    \begin{itemize}
    \item \emph{Case 1: $t = t'$ or $|t - t'| > 1$.}
    The half-graph crossing has no edges within a slice (as established in Condition~1) and no edges between non-consecutive slices.
    Hence $L_j$ and $L_{j'}$ are fully non-adjacent.

    \item \emph{Case 2: $\{t,t'\} = \{t, t+1\}$ for some $t \in [r-1]$, i.e., both slices are internal.}
    Every edge of the crossing between $S_t$ and $S_{t+1}$ connects $\pi(a,b,t)$ to $\pi(a,b,t+1)$ for the same pair $(a,b) \in [n]^2$.
    Each layer carries a \emph{pair-index set}: the set of pairs $(a,b)$ indexing its vertices. For each $k\in[h]$
    \begin{itemize}
        \item the pair-index set of $M^t_k$ is $\{(\alpha_k(i), \beta_k(i)) : i \in [m]\} \subseteq I_k \times J_k$,
        \item the pair-index set of $N^t_k$ is $\{(\alpha_{k+1}(i), \beta_k(i)) : i \in [m]\} \subseteq I_{k+1} \times J_k$.
    \end{itemize}
    The only pairs of layers in consecutive internal slices $S_t, S_{t+1}$ sharing a pair-index set are of the form $(M^t_k, M^{t+1}_k)$ or $(N^{t+1}_k, N^t_k)$ for some $k\in[h]$. These are precisely the consecutive pairs in our layer sequence.
    For any other combination, the pair-index sets are disjoint: 
    mixing $M$- and $N$-type layers from the same group $k$ results in different first-coordinate intervals (i.e., $I_k \cap I_{k+1} = \emptyset$), while
    layers from different groups $k \neq k'$ use different second coordinates (i.e., $J_k \cap J_{k'} = \emptyset$).
    Hence, $L_j$ and $L_{j'}$ have no edges between them, so they are fully non-adjacent.

    \item \emph{Case 3: $\{t,t'\} = \{0,1\}$, i.e., one layer lies in $S_0$ (containing $a$-roots) and the other in $S_1$.}
    The only non-consecutive pairs of layers $(L_j,L_{j'})$ involving both slices $S_0$ and $S_1$ are of one of the following four forms.
    In all cases, we can assume $k \leq k'$, because $L_j$ comes before $L_{j'}$.

    \begin{itemize}
        \item $(A_k,M_{k'}^1)$ with $k < k'$:
        Recall that  
        \[
            A_k = \{a_{\alpha_k(i)}:i\in[m]\}
            \quad \text{and} \quad
            M_{k'}^1 = \{\pi(\alpha_{k'}(i),\beta_{k'}(i),1):i\in[m]\}
        \]
        and that the neighbors of $a_i$ are exactly $\{\pi(i',j,1) \mid i \leq i' \leq n \text{ and } j \in [n] \}$.
        Because all elements from the image $I_k$ of $\alpha_k$ are smaller than from the image $I_{k'}$ of $\alpha_{k'}$, we have that $L_j = A_k$ and $L_{j'} = M_{k'}^1$ are fully adjacent.

        \item $(A_k,N_{k'}^1)$ with $k \leq k'$: As in the previous case, the two layers are fully adjacent. This time we use the fact that (even if $k = k'$) the image of $\alpha_{k}$ is smaller than the image of $\alpha_{k' + 1}$, which is used to define $N_{k'}^1$. 

        \item $(M_{k}^1, A_{k'})$ with $k < k'$: Analogous to the previous cases, but now the two layers are fully non-adjacent, because the image of $\alpha_{k'}$ is larger than the image of $\alpha_k$.

        \item $(N_{k}^1,A_{k'})$ with $k < k' - 1$:
        Here we have the stronger assumption $k < k' - 1$, because if $k = k' - 1$, then the two layers would be consecutive. Analogous to the previous cases, the two layers are fully non-adjacent, because the image of $\alpha_{k'}$ is larger than the image of $\alpha_{k+1}$.
    \end{itemize}

    \item \emph{Case 4: $\{t,t'\} = \{r,r+1\}$, i.e., one layer lies in $S_{r+1}$ (containing $b$-roots) and the other in $S_r$.}
    Analogous to Case 3.
    
    \end{itemize}
    Having exhausted all cases, this proves the second condition.

    \paragraph{Condition 3: Consecutive layers are connected by a comparison relation.}
    For the pairs
    \[
        M^t_j,M^{t+1}_j \quad (1\le t<r),
        \qquad
        N^{t+1}_j,N^t_j \quad (1\le t<r),
    \]
    adjacency in the crossing is exactly the equality relation on the parameter $i$.
    The boundary pairs
    \[
        A_j,M^1_j,\qquad M^r_j,B_j,\qquad B_j,N^r_j,\qquad N^1_j,A_{j+1}
    \]
    realize the relations
    \[
        \le,\ \ge,\ \le,\ \ge
    \]
    in the half-graph crossing.
    In each case, the adjacency between consecutive layers is determined by a comparison relation on the index $i \in [m]$, establishing the third condition. This finishes the proof that $H$ is an $(m,h)$-pattern.
\end{proof}
\section{Proof of \Cref{lem:wl-bounded}}\label{sec:appendix-wl-bounded}

\lemWlBounded*

\begin{proof}
    Call a pair $(G, W)$ \emph{$k$-minimal} if $G \in \CC$, $W \subseteq V(G)$ is a well-linked set of size $k$,
    and no proper induced subgraph of $G$ containing $W$ has $W$ as a well-linked set.

    Suppose for contradiction that there exist infinitely many $k$-minimal pairs $(G_1,W_1),(G_2,W_2),\ldots$
    with $|V(G_1)| < |V(G_2)| < \cdots$.
    Label each $G_i$ by $\ell_i : V(G_i) \to [2]$ with $\ell_i(v) = 1$ if $v \in W_i$
    and $\ell_i(v) = 2$ otherwise.
    Since $\CC$ is $2$-WQO, there exist $i < j$ and an injective label-preserving
    induced-subgraph embedding $\phi : V(G_i) \to V(G_j)$.
    Label-preservation gives $\phi(W_i) \subseteq W_j$,
    and since $\phi$ is injective and $|W_i| = |W_j| = k$, we have $\phi(W_i) = W_j$.
    Because $\phi$ is an embedding, $G_j[\phi(V(G_i))]$ is isomorphic to $G_i$,
    so $W_j = \phi(W_i)$ is well-linked in $G_j[\phi(V(G_i))]$.
    Since $|V(G_i)| < |V(G_j)|$, the subgraph $G_j[\phi(V(G_i))]$ is a proper
    induced subgraph of $G_j$ that contains $W_j$, contradicting $k$-minimality of $(G_j,W_j)$.
    Hence there is a bound $f(k)$ such that every $k$-minimal graph has at most $f(k)$ vertices.

    The statement of the lemma now follows by choosing $H \in \CC$ to be any induced subgraph of $G$ such that $(H,W)$ is $k$-minimal.
\end{proof}

\section{Proof of the Double Separator Lemma}\label{sec:appendix-partition}

We prove the Double Separator Lemma by building on the theory of insulators from~\cite{dreier2024flipbreakability}.
We first recall the relevant definitions (\Cref{subsec:insulators}), then establish a key lemma guaranteeing the existence of an insulator with many empty top cells (\Cref{subsec:bounded-height-insulators}).

\subsection{Insulators}\label{subsec:insulators}

We recall the notion of the insulation property from (the full version of)~\cite{dreier2024flipbreakability}.
We mostly replicate the required definitions from~\cite{dreier2024flipbreakability}, but make slight simplifications when applicable to our setting, as made explicit in the upcoming \Cref{rem:weaken}.

\begin{definition}[Grids]\label{def:grids}
    Fix a non-empty sequence $I$ and an integer $h \ge 1$.
    A \emph{grid} $A$ \emph{indexed by} $I$ and of \emph{height} $h$ in a graph $G$ is a collection of pairwise disjoint sets $A[i,r] \subseteq V(G)$, for $i\in I$ and $r \in [h]$, called \emph{cells}.
\end{definition}

To facilitate notation, we often assume, up to renaming, the indexing sequence to be   $I= (1, \ldots, n)$.
We do so in the following.
For subsets $J \subseteq I$ and $R \subseteq [h]$, we write
$A[J,R] = \bigcup_{i \in J, r \in R} A[i,r]$.
We often use implicitly defined sets via wildcards and comparisons. For example $A[{\le}i,*]$ stands for $A\bigl[\{1,\dots,i\},[h]\bigr]$.
In particular, we use $A[i,*]:=\bigcup_{r\in [h]} A[i,r]$ and $A[*,r]:=\bigcup_{i\in I}A[i,r]$,
and refer to those sets as to \emph{columns} and \emph{rows} of $A$, respectively.
In slight abuse of notation, we often write $A$ instead of $A[*,*]$ to denote the set of all elements inside the grid.
We define the \emph{interior} of $A$ as $\int(A) := A \setminus (A[1,*] \cup A[n,*] \cup A[*,h])$.
It is intentional and will later be important that the cells $A[2,1], \ldots, A[n-1,1]$ of the bottom row are part of the interior.
Moreover, we say two columns $A[i,*]$ and $A[j,*]$  are \emph{close}, if $|i-j| \le 1$
and two rows $A[*,r]$ and $A[*,t]$ are \emph{close}, if $|r-t| \le 1$.
Two cells are \emph{close} if their respective columns and rows are close.
Note that unlike standard matrix notation, to highlight the hierarchical relationship between columns and rows,
our notation $A[i,r]$ first mentions the column index $i \in I$ and then the row index~$r \in [h]$.

\newcommand{\explanation}{$\blacktriangleright$\ }
\begin{definition}[Insulators]\label{def:insulator}
    An \emph{insulator} $\gc{A} = (A, \KK, F, R)$ \emph{indexed by a sequence $I$} of \emph{height} $h$ and \emph{cost} $k$ in a graph $G$ consists of
    \begin{itemize}
        \item a grid $A$ indexed by $I$ and of height $h$,
        \item a partition $\KK$ of $V(G)$ into at most $k$ color classes,
        \item a symmetric relation $F\subseteq \KK^2$ specifying a flip $G' := G \oplus F$,
        \item a relation $R \subseteq \KK^2$,
    \end{itemize}
    such that all the following conditions hold:
    \begin{enumerate}[leftmargin= 4em, label={(I.$\arabic*$)}] 
        \item\label{itm:rootedness} Every vertex $v \in A[i,r]$ with $r>1$, $i \in I$ has a neighbor in the cell $A[i,r-1]$ in $G'$. 

        \smallskip
        \explanation The mandatory downward edge, together with \ref{itm:adj-bot-left}, keeps each column cohesive.

        \item\label{itm:outside} 
            For every $v \notin A[*,*]$ and $X \in \KK$ we require that $v$ is \emph{homogeneous} to $X \cap \int(A)$ in $G$ \\
            (that is, either all or no vertices in $X \cap \int(A)$ are adjacent to $v$).

            \smallskip
            \explanation The inside of the insulator is ``insulated'' from its outside:
            the adjacencies between the two are described using only colors.

        \item\label{itm:adjacency} 
            For every $u \in A[i,r]$ with $r < h$, $i \in I$ and $v \in A$ we have the following: \\
            (Up to renaming, we assume $I = (1,\ldots,n)$.)
            \begin{enumerate}[leftmargin= 5em, label={(I.$3$.$\arabic*$)}]
                \item \label{itm:adj-different-rows} If $u$ and $v$ are in rows that are not close and $u\in\int(A)$, \\
                    then they are non-adjacent in $G'$.

                \item \label{itm:adj-bot-left} If $v \in A[{<}i,r-1] \cup A[{>}i,r+1]$, then $u$ and $v$ are non-adjacent in $G'$.
                \item \label{itm:adj-left} If $v \in A[{>}i+1, \{r,r-1\}]$, then $uv \in E(G)$ if and only if $(\KK(u),\KK(v))\in R$.
                \item \label{itm:adj-right} If $v \in A[{<}i-1, \{r,r+1\}]$, then $uv \in E(G)$ if and only if $(\KK(v),\KK(u))\in R$.
            \end{enumerate}
            Otherwise, we make no claims regarding the adjacency of $u$ and $v$.

            \smallskip
            \explanation 
            Properties \ref{itm:adj-different-rows}, \ref{itm:adj-left}, and \ref{itm:adj-right} provide vertical and horizontal insulation inside the insulator.
            Property \ref{itm:adj-bot-left} helps to keep each column cohesive.
            See \Cref{fig:adjacency} for an illustration.

\end{enumerate}
\end{definition}

\begin{remark}\label{rem:weaken}
    For the sake of presentation, we have relaxed the definition of an insulator. The definition given in~\cite{dreier2024flipbreakability} contains some more restrictions, which we do not require for our arguments to work.
    In particular, two variants of insulators, \emph{unordered} and \emph{ordered} ones, are defined in~\cite{dreier2024flipbreakability}. As both of them satisfy the subset of properties we have restricted ourselves to, we can treat them uniformly.
\end{remark}

\begin{figure}[htbp]
    \center
    \includegraphics[width = \linewidth]{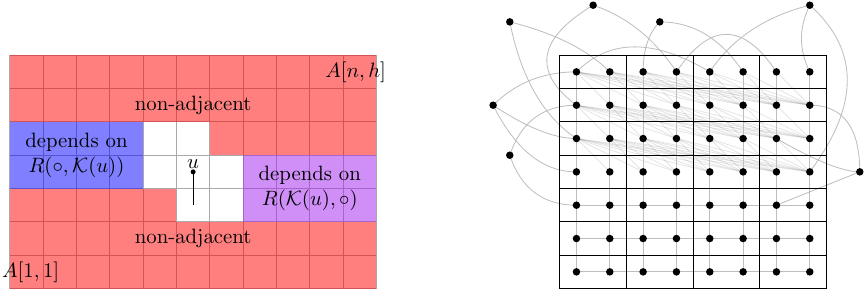}%
    \caption{To the left: an illustration of how the insulator property \ref{itm:adjacency} controls the adjacency of a vertex $u$ within the insulator. To the right: an example of an insulator.}\label{fig:adjacency}
\end{figure}

An example of an insulator is depicted in \cref{fig:adjacency}.


\begin{definition}
    Let $\gc A$ be an insulator with grid $A$ indexed by a sequence $I$ in a graph $G$ and let $W \subseteq V(G)$.
    We say that $\gc A$ \emph{insulates} $W$
    if there is a bijection $f: W \rightarrow I$, such that for all $v \in W$
    \[
        v \in A[f(v),1].
    \]
    A set $W$ is \emph{$(r,k)$-insulated} in $G$ if there is an insulator $\gc A$ of height $r$ and cost $k$ that insulates $W$.
\end{definition}

\begin{definition}\label{def:insulationProperty}
    A graph class $\CC$ has the \emph{insulation property}, if for every radius $r\in \N$ there exist a function $N_r:\N\rightarrow\N$ and a constant $k_r \in \N$ such that for every $m\in\N$, $G\in \CC$, $W\subseteq V(G)$ with $|W| \geq N_r(m)$, there is a subset $W_\star\subseteq W$ of size at least $m$ that is $(r,k_r)$-insulated in $G$.
\end{definition}

\begin{theorem}[{\cite[Thm.\ 13.1]{dreier2024flipbreakability}}]\label{thm:insulation-property}
    Every monadically dependent graph class has the insulation property.
\end{theorem}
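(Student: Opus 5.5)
The plan is not to reprove this from scratch but to derive it from \cite[Thm.\ 13.1]{dreier2024flipbreakability}, checking carefully that our relaxed \Cref{def:insulator} and \Cref{def:insulationProperty} are implied by the notions used there. First, I would recall the statement in the full version of \cite{dreier2024flipbreakability}. For a monadically dependent class $\CC$ and every height $r$, there are $k_r$ and $N_r$ such that every $W\subseteq V(G)$ with $|W|\ge N_r(m)$ contains a subset $W_\star$ of size $m$ insulated by an insulator of height $r$ and cost $k_r$. In some cases this insulator is \emph{ordered}, in others \emph{unordered}.

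Second, following \Cref{rem:weaken}, I would go through the original definitions of ordered and unordered insulators and verify each of our conditions. Condition \ref{itm:rootedness} (downward neighbours in $G'$) and condition \ref{itm:outside} (outside vertices are homogeneous to each colour class of the interior) appear essentially verbatim there. For \ref{itm:adj-different-rows} and \ref{itm:adj-bot-left}, I would check that the original definition forbids, in the flip $G'$, edges between interior cells in non-close rows, and edges into the lower-left and upper-right neighbouring rows. For \ref{itm:adj-left} and \ref{itm:adj-right}, the original states that adjacency between non-close columns in close rows depends only on colours and on the column order. In the unordered case this relation is symmetric, so it is a special case of our directed relation $R$. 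In the ordered case it is exactly our $R$, possibly after refining $\KK$ by a bounded factor to absorb extra colour data, which only increases the cost to some $k_r'$ depending on $r$. I would also check that the bijection $W_\star\to I$ into the bottom row matches our notion of ``insulates''.

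Third, I would confirm that the parameter dependence is as required: $k_r$ depends only on $r$ and $\CC$, and $N_r$ on $r$, $m$ and $\CC$. If needed, I would also confirm that the heights coincide, or else pass from a taller insulator to height $r$ by deleting top rows. Deleting top rows moves some cells into the top row (the new boundary), so the homogeneity condition \ref{itm:outside} must be rechecked for the new boundary cells. I expect this translation step, especially reconciling the original boundary and interior conventions with ours, to be the only real obstacle. The deep combinatorial content, namely the induction on $r$ via Ramsey arguments and the forbidden-pattern characterization (\Cref{thm:dependent-forbidden-patterns}), is taken as given from \cite{dreier2024flipbreakability}.
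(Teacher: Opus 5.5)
Your proposal is correct and takes the same route as the paper: the statement is imported directly from \cite[Thm.\ 13.1]{dreier2024flipbreakability}, and the only work is the observation of \Cref{rem:weaken} that both the ordered and unordered insulators there satisfy the relaxed conditions \ref{itm:rootedness}--\ref{itm:adjacency}. Your hedges about refining $\KK$ or truncating the height are unnecessary, since the cited insulation property is already stated per height. If you did truncate, the vertices needing a recheck of \ref{itm:outside} are those in the deleted rows, which now lie outside the grid, rather than the new top-row cells; \ref{itm:adj-different-rows} handles them.
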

\begin{remark}
    In~\cite{dreier2024flipbreakability}, the above \Cref{thm:insulation-property} is stated as an exact characterization: also every class with the insulation property is monadically dependent.
    Since we have weakened the definition of an insulator (cf.\ \Cref{rem:weaken}), this also weakens the insulation property, so we only preserve one implication.
    We remark (but we do not prove, and do not require) that this weaker insulation property still characterizes monadic dependence.
\end{remark}

\subsection{Bounded height insulators}\label{subsec:bounded-height-insulators}

\begin{lemma}\label{lem:bounded-height}
    Let $G$ be an $(m,r)$-pattern-free graph. Every insulator $\gc A$ of height $r+1$ and cost $k$ in $G$ has fewer than $N(m,r,k) := 2k^r \cdot m + 2$ many columns whose top cell is non-empty.

    \smallskip\noindent
    More precisely, if the grid $A$ of $\gc A$ is indexed by $I$, then
    \[
        \bigl|\{i \in I : A[i,r+1] \neq \emptyset\}\bigr| < N(m,r,k).
    \]
\end{lemma}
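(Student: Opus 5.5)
The plan is to argue by contradiction and extract an $(m,r)$-pattern from the insulator. First I find the pattern in the flipped graph $G' := G \oplus F$, and then I transfer it back to $G$ exactly as in the proof of \Cref{lem:flip-pattern}.

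\emph{Setup.} Assume $I=(1,\ldots,n)$ and suppose at least $N(m,r,k)=2k^r m+2$ columns have a non-empty top cell $A[i,r+1]$.
\begin{itemize}
\item Discard columns $1$ and $n$. The remaining at least $2k^r m$ columns are interior columns.
\item Keep every second one in the order of $I$. This leaves at least $k^r m$ columns that are pairwise not close.
\item For each kept column $i$, choose $v_{r+1}(i)\in A[i,r+1]$. Using \ref{itm:rootedness} repeatedly, choose $v_t(i)\in A[i,t]$ for $t=r,\dots,1$ such that $v_{t+1}(i)v_t(i)\in E(G')$.
\end{itemize}
The vertices $v_1(i),\dots,v_r(i)$ lie in rows $1,\dots,r$ of an interior column, so they are all in $\int(A)$. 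Give each kept column the color sequence $(\KK(v_1(i)),\dots,\KK(v_r(i)))\in\KK^r$. By pigeonhole, $m$ kept columns $i_1<\dots<i_m$ share a common sequence $(C_1,\dots,C_r)$. Set $(\ell,t)\mapsto v_t(i_\ell)$; the layers are the rows $t\in[r]$.

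\emph{Checking the pattern in $G'$.}
\begin{itemize}
\item \emph{Within a layer.} Take $\ell<\ell'$. Both vertices lie in row $t<h=r+1$, their columns are not close, and both have color $C_t$. By \ref{itm:adj-left}, adjacency in $G$ is decided by whether $(C_t,C_t)\in R$, which is the same for every pair. The flip acts uniformly on $C_t\times C_t$, so in $G'$ each layer is a clique or an independent set.
\item \emph{Non-consecutive layers.} Rows $t,t'$ with $|t-t'|>1$ are not close and both vertices are interior. By \ref{itm:adj-different-rows}, the layers are fully non-adjacent in $G'$.
\item \emph{Consecutive layers $t$ and $t+1$.} Consider $u=(\ell,t+1)$ and $v=(\ell',t)$.
 \begin{itemize}
 \item If $\ell=\ell'$, they are adjacent in $G'$ by the choice of $v_t$.
 \item If $\ell<\ell'$, the vertex $u$ lies in a column strictly to the left of $v$, one row up. Apply \ref{itm:adj-bot-left} from the side of $v$, which is in row $t<h$: $u\in A[{<}i_{\ell'},t+1]$ is not of the excluded form $A[{>}i,r+1]$. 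I have to read the condition carefully here. From $v$'s perspective, $u\in A[{<}i,\cdot]$ is only excluded in the row below. So I instead use $u$'s perspective, which requires $t+1<h$. When $t+1=r+1$ is the top row, I shift the whole argument to use only rows $1,\dots,r$ together with row $r+1$ as helper, which keeps everything consistent.
 \item Concretely, \ref{itm:adj-bot-left} applied to $u\in A[i_\ell,t+1]$ says $u$ is non-adjacent in $G'$ to $A[{<}i_\ell,t]$. This handles $\ell'<\ell$.
 \item \ref{itm:adj-left} applied to $u$ says its adjacency to $A[{>}i_\ell+1,t]$ in $G$ is decided by whether $(C_{t+1},C_t)\in R$, a constant. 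This handles $\ell'>\ell$.
 \end{itemize}
 Combining these cases, the adjacency is given by the relation $\ell \sim \ell'$ where $\sim$ is either $=$ or $\le$, after accounting for the uniform flip on $C_{t+1}\times C_t$.
\end{itemize}
Each relation listed above stays in $\{=,\neq,\le,\ge,<,>\}$ after uniform complementation.

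\emph{Back to $G$.} The $m\times r$ configuration is an $(m,r)$-pattern in $G'$, and every vertex of layer $t$ has color $C_t$. Flipping by $F$ again to recover $G$ is therefore uniform on each layer and on each pair of layers. As in the proof of \Cref{lem:flip-pattern}, it preserves being an $(m,r)$-pattern. This contradicts $(m,r)$-pattern-freeness of $G$.

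\emph{Main obstacle.} The delicate step is the consecutive-layer analysis. I must be careful about which of $u$ and $v$ the insulator axioms are applied to, since \ref{itm:adjacency} requires the first vertex to lie in a row $<h$. I must also make sure the same-column edges come from \ref{itm:rootedness}. Everything else is pigeonhole bookkeeping that matches the bound $2k^r m+2$ exactly: $2$ boundary columns, a factor $2$ for spacing, and $k^r$ color sequences.
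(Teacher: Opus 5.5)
Your proof is correct and follows the paper's argument essentially step for step: discard the boundary columns, thin to pairwise non-close columns, build downward paths via \ref{itm:rootedness}, pigeonhole on color sequences, and read off the pattern conditions from \ref{itm:adjacency}. The only differences are cosmetic: you verify the pattern in $G'$ and flip back as in \Cref{lem:flip-pattern} instead of checking it directly in $G$, and you apply the axioms from the upper vertex of a consecutive pair rather than the lower one. Also, your worry about the case $t+1=r+1$ is vacuous, since the layers are rows $1,\dots,r$, so $t+1\le r<h$ always holds and \ref{itm:adj-bot-left} and \ref{itm:adj-left} apply to $u$ directly.
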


\begin{proof}
    Assume that $I = (1,\ldots,n)$ and
    assume towards a contradiction that there are at least $N(m,r,k)$ many columns $I_1 := \{i \in I : A[i,r+1] \neq \emptyset\}$ with a non-empty top cell.
    We construct an $(m,r)$-pattern as an induced subgraph of $G$, contradicting the assumption.
    Write $\gc A = (A, \KK, F, R)$ and $G' := G \oplus F$.

    \paragraph{Step 1: Selecting well-spaced columns.}
    By removing the two outermost columns of $I_1$, and then keeping only every second column, we find a subset $I_2 \subseteq I_1$ of size 
    \[
        |I_2| \geq \frac{|I_1|-2}{2} \geq k^r \cdot m    
    \]
    of pairwise non-close columns, that contains no boundary column.

    \paragraph{Step 2: Constructing downward paths.}
    For each $i \in I_2$, we build a path through column $i$ from top to bottom.
    Pick an arbitrary vertex $v(i,r+1) \in A[i,r+1]$ (non-empty since $i \in I_1$).
    For each $j \in [r]$, use the insulator property \ref{itm:rootedness} to pick a neighbor $v(i,j)\in A[i,j]$ of $v(i,j+1)$ in the graph $G'$.
    In the following we will only use $v(i,1), \ldots, v(i,r)$, that crucially all lie in the interior of $A$.

    \paragraph{Step 3: Pigeonholing on color sequences.}
    We assign to each path a \emph{color sequence}
    \[
     \big(\KK(v(i,1)), \ldots, \KK(v(i,r))\big) \in \KK^r.
    \]
    Since $|\KK| \le k$, there are at most $k^r$ distinct color sequences.
    By the pigeonhole principle, there is a subset $I_3 \subseteq I_2$ with $|I_3| \ge m$ such that all paths indexed by $I_3$ share the same color sequence $(C_1, \ldots, C_r)$.

    Write $I_3 = \{\alpha(1) < \alpha(2) < \cdots < \alpha(m)\}$ and set $w(i,j) := v(\alpha(i),j)$ for $i \in [m]$ and $j \in [r]$.

    \paragraph{Step 4: Verifying the pattern.}
    We claim that $\{w(i,j) : (i,j) \in [m] \times [r]\}$ induces an $(m,r)$-pattern in~$G$, with monochromatic layers 
    \[
        L_j := \{w(1,j), \ldots, w(m,j)\} \subseteq C_j \in \KK   \quad \text{for $j \in [r]$.}
    \]
    We verify the required conditions.
    \paragraph{Condition 1: Each layer induces a clique or independent set.}

    Consider two vertices $w(i,j)$ and $w(i',j)$ with $i < i'$ contained in the same layer $L_j$. By construction, the two vertices are not contained in top row of $A$, and are not in two columns that are close in $A$. Hence, the insulator property~\ref{itm:adj-left} guarantees:
    \[
        w(i,j)w(i',j) \in E(G) \quad\Leftrightarrow\quad (C_j, C_j) \in R.
    \]
    This is independent of $i$ and $i'$, so $L_j$ is a clique or independent set.

    \paragraph{Condition 2: Non-consecutive layers are fully homogeneous.}

    Consider two non-consecutive layers $L_j$ and $L_{j'}$, i.e., $|j-j'| > 1$. By construction, the two layers contain only vertices from the interior of $A$, from the two non-close rows $L_j \subseteq A[*,j]$ and $L_{j'}\subseteq A[*,j']$. Hence, the insulator property~\ref{itm:adj-different-rows} guarantees that $L_j$ and $L_{j'}$ are non-adjacent in the $\KK$-flip $G'$.
    Now if $(C_j,C_{j'}) \notin F$ then the same holds in $G$, otherwise $(C_j,C_{j'}) \in F$ and $L_j$ and $L_{j'}$ are fully adjacent in $G$.

    \paragraph{Condition 3: Consecutive layers are connected by a comparison relation.}

    Fix $j \in [r-1]$ and define booleans
    \[
        p_j := \bigl[(C_j, C_{j+1}) \in F\bigr], \qquad q_j := \bigl[(C_{j+1}, C_j) \in R\bigr].
    \]
    We show that whether $w(i,j)w(i',j+1) \in E(G)$ depends only on the order of $i$ and $i'$:

    \begin{itemize}
    \item \emph{Case $i < i'$:}
    Since $\alpha(i) < \alpha(i')$, we have $w(i',j+1) \in A[{>}\alpha(i), j+1]$.
    By~\ref{itm:adj-bot-left} applied to $u = w(i,j)$, the two vertices are non-adjacent in~$G'$.
    Hence,
    \[
        w(i,j)w(i',j+1) \in E(G)\quad \Leftrightarrow \quad p_j.    
    \]
    
    \item \emph{Case $i = i'$:}
    By the path construction, $w(i,j)$ and $w(i,j+1)$ are adjacent in the flip~$G'$.
    Hence, 
    \[
        w(i,j)w(i,j+1) \in E(G) \quad \Leftrightarrow \quad \neg p_j.
    \]

    \item \emph{Case $i > i'$:}
    Since $\alpha(i) - 1 > \alpha(i')$ (non-close columns), we have $w(i',j+1) \in A[{<}\alpha(i)-1, \{j, j+1\}]$.
    By~\ref{itm:adj-right} applied to $u = w(i,j)$, the adjacency in $G$ is determined by the colors:
    \[
        w(i,j)w(i',j+1) \in E(G)
        \quad \Leftrightarrow \quad (C_{j+1}, C_j) \in R  \quad \Leftrightarrow \quad q_j.
    \]
    \end{itemize}
    In summary, $w(i,j)w(i',j+1) \in E(G)$ iff $p_j$ when $i < i'$, iff $\neg p_j$ when $i = i'$, and iff $q_j$ when $i > i'$.
    Each of the four combinations of $(p_j, q_j)$ yields one of the six comparison relations:
    \begin{align*}
        p_j = 0,\; q_j = 0 &\colon \quad \text{edge iff $i = i'$,} \\
        p_j = 1,\; q_j = 1 &\colon \quad \text{edge iff $i \neq i'$,} \\
        p_j = 1,\; q_j = 0 &\colon \quad \text{edge iff $i < i'$,} \\
        p_j = 0,\; q_j = 1 &\colon \quad \text{edge iff $i \ge i'$.}
    \end{align*}

    We conclude that $\{w(i,j)\}$ induces an $(m,r)$-pattern in $G$, contradicting the assumption that $G$ is $(m,r)$-pattern-free.
\end{proof}

\begin{lemma}\label{lem:empty-top-insulator}
    For every pattern-free graph class $\CC$, there exist a function $f \from \N \to \N$ and integers $h,q \in \N$ such that for every graph $G \in \CC$ and every set $X \subseteq V(G)$ with $|X| \ge f(m)$,
    there is an insulator $\gc A$ of height $h$ and cost $q$ in $G$ that insulates a set $W \subseteq X$ of size at least $m$ and whose indexing sequence $(a_1,\ldots,a_n)$ contains $m$ consecutive columns
    \[
        a_{t+1},\ldots,a_{t+m}
    \]
    each with an empty top cell, for some $t\in\{0,\ldots,n-m\}$.
\end{lemma}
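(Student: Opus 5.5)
Since $\CC$ is pattern-free, fix $m_0,r$ such that $\CC$ is $(m_0,r)$-pattern-free. By \Cref{thm:2wqo-dependent}, $\CC$ is monadically dependent, and by \Cref{thm:insulation-property} it has the insulation property. Set the height $h := r+1$, and let $N_h$ and $q := k_h$ be the function and constant from \Cref{def:insulationProperty} for this radius. Then \Cref{lem:bounded-height} gives a constant $N := N(m_0,r,q) = 2q^r m_0 + 2$ such that every insulator of height $h$ and cost $q$ in a graph of $\CC$ has fewer than $N$ columns with a non-empty top cell. The idea is to make the insulated set so large that these at most $N-1$ bad columns cannot break the index sequence into pieces that are all shorter than $m$.

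Concretely, define $M(m) := N\cdot(m+1)$ and $f(m) := N_h(M(m))$. Given $G\in\CC$ and $X$ with $|X|\ge f(m)$, the insulation property yields $W_\star\subseteq X$ with $|W_\star|\ge M(m)$, insulated by an insulator $\gc A$ of height $h$ and cost $q$. Its index sequence $(a_1,\ldots,a_n)$ has $n=|W_\star|\ge N(m+1)$, since the insulation map is a bijection. At most $N-1$ indices carry a non-empty top cell, and removing them splits $(a_1,\ldots,a_n)$ into at most $N$ maximal runs of consecutive indices whose top cells are all empty. These runs have total length at least $n-(N-1) > Nm$, so by pigeonhole some run has length at least $m$. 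Any $m$ consecutive entries of that run give the required $a_{t+1},\ldots,a_{t+m}$, and we take $W := W_\star$, which has size at least $m$. The cost is $q$ and the height is $h$, both independent of $m$, as required.

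The only point needing care is the bookkeeping: \Cref{lem:bounded-height} is stated for height exactly $r+1$, which is why $h$ is chosen to equal $r+1$. We also need $h$ and $q$ to be fixed before $m$ is chosen, which holds because they depend only on $m_0$, $r$ and the class $\CC$. I do not expect a real obstacle here: all the substantive work is in \Cref{lem:bounded-height} and the insulation property, and this lemma is essentially a counting corollary of the two.
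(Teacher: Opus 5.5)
Your proposal is correct and follows the paper's proof essentially step for step. You fix the height as one more than the pattern-free parameter, take the insulation property at that height, set $f(m)$ to be the insulation bound applied to $B(m+1)$ with $B$ the constant from \Cref{lem:bounded-height}, and pigeonhole the at most $B-1$ non-empty top cells into a run of at least $m$ consecutive empty-top columns.
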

\begin{proof}
    By definition, there are integers $s,h_0 \in \N$ such that $\CC$ is $(s,h_0)$-pattern-free.
    By \cref{thm:2wqo-dependent,thm:insulation-property}, $\CC$ is monadically dependent and has the insulation property.
    This means for the fixed height $h := h_0 + 1$ there are a function $N \from \N \to \N$ and an integer $q \in \N$ such that for every graph $G \in \CC$ and every set $X \subseteq V(G)$ of size at least $N(t)$, there is a subset $W \subseteq X$ of size at least $t$ that is $(h,q)$-insulated in $G$.
    Let
    \[
        B := 2q^{h_0} \cdot s + 2,
    \]
    be the bound from \cref{lem:bounded-height}, and define
    \[
        f(m) := N(B(m+1)).
    \]
    Consider $G \in \CC$ and $X \subseteq V(G)$ with $|X| \ge f(m)$.
    By the choice of $f$, there is a subset $W \subseteq X$ of size at least $B(m+1)$ and an insulator $\gc A = (A,\KK,F,R)$ of height $h$ and cost $q$ that insulates $W$.

    Let $I=(a_1,\ldots,a_n)$ be the indexing sequence of the grid $A$, listed in increasing order.
    Since $\gc A$ insulates $W$, every column $A[a_t,*]$ contains exactly one vertex of $W$ in its bottom cell.
    Hence $n=|W|\ge B(m+1)$.

    By \cref{lem:bounded-height}, fewer than $B$ columns of $\gc A$ have non-empty top cell.
    Therefore the set
    \[
        J:=\{t\in[n] : A[a_t,h]=\emptyset\}
    \]
    has size at least
    \[
        n-(B-1) \ge B(m+1)-(B-1) > Bm.
    \]
    The complementary set $[n]\setminus J$ has size at most $B-1$, so it splits the empty-top columns into at most $B$ intervals of consecutive indices.
    Since $|J|>Bm$, one of these intervals has length at least $m$.
    Hence, there is some $t\in\{0,\ldots,n-m\}$ such that
    \[
        A[a_{t+1},h]=\cdots=A[a_{t+m},h]=\emptyset.
    \]
    This proves the lemma.
\end{proof}

\subsection{Wrapping up}
\label{subsec:wrapup}
Armed with \Cref{lem:empty-top-insulator}, we finally prove the Double Separator Lemma.

\begin{samepage}
\begin{restatable}[Double Separator Lemma]{lemma}{lemPartitionDouble}\label{lem:partitionDouble}
    For every pattern-free graph class $\CC$, there are $f \from \N \to \N$ and $k \in \N$ such that for every graph $G \in \CC$ and set $X$ of size at least $f(m)$ there exists a size $m$ partition $P_1,\ldots,P_m$ of $V(G)$ such that:
    \begin{enumerate}
        \item $X$ intersects each of the sets $P_1,\ldots,P_m$,
        \item for all $1\leq \alpha < \beta \leq m$:
        \[
            \rk_G\left( M, L \uplus  R  \right) \leq k,
        \]
        where $L := P_{1} \cup \ldots \cup P_{\alpha-1}$,  $M := P_{\alpha+1} \cup \ldots \cup P_{\beta-1}$, and $R :=  P_{\beta+1} \cup \ldots \cup P_m$.
    \end{enumerate}
\end{restatable}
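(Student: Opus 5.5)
The plan is to read the partition off the columns of an insulator given by \Cref{lem:empty-top-insulator}, and then bound each cut-rank by the number of classes in a bounded partition on each side of the cut.

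\textbf{Choosing the insulator.} Let $f_0,h,q$ be given by \Cref{lem:empty-top-insulator} for $\CC$, and set $f(m) := f_0(m+2)$. For $G\in\CC$ and $|X|\ge f(m)$ we get an insulator $\gc A=(A,\KK,F,R)$ of height $h$ and cost $q$. It insulates some $W\subseteq X$ and has $m+2$ consecutive columns $a_{t+1},\dots,a_{t+m+2}$ with empty top cells. We discard the outer two of these, so that the remaining $m$ columns $c_1,\dots,c_m$ (with $c_i := a_{t+1+i}$) are not the first or last column of the grid. Their cells in rows $1,\dots,h-1$ therefore lie in $\int(A)$, and their top cells are empty. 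Define the partition by
\begin{itemize}
\item $P_i := A[c_i,*]$ for $2\le i\le m$;
\item $P_1 := A[c_1,*]$ together with all remaining vertices, namely everything outside the grid and all other columns.
\end{itemize}
Each column contains a vertex of $W\subseteq X$ in its bottom cell, so $X$ meets every part. This gives item 1.

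\textbf{Shape of the cut.} Fix $\alpha<\beta$. The middle set $M$ is a union of interior columns strictly between $c_\alpha$ and $c_\beta$, with nothing in the top row. Removing the columns $c_\alpha$ and $c_\beta$ guarantees that every grid vertex of $L\uplus R$ lies in a column that is not close to any column of $M$. We pass to the flip $G'=G\oplus F$: the difference of the $M\times(L\uplus R)$ adjacency matrices of $G$ and $G'$ is constant on $\KK$-blocks, so it has rank at most $q$. It therefore suffices to bound the rank in $G'$, or in $G$ where a property is stated for $G$.

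\textbf{Outside vertices.} By \ref{itm:outside}, every vertex outside $A$ is homogeneous in $G$ to each set $C\cap\int(A)$ with $C\in\KK$. Hence its row in the matrix, restricted to $M\subseteq\int(A)$, is a union of at most $q$ colour-class indicator vectors. These rows together contribute rank at most $q$.

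\textbf{Far grid vertices.} Take $v\in M$ in row $r<h$, and $w$ a grid vertex in a far column, in row $r'$ (possibly $r'=h$ or a boundary column). Apply \ref{itm:adjacency} with $u:=v$.
\begin{itemize}
\item If $|r-r'|>1$: $v\in\int(A)$, so \ref{itm:adj-different-rows} makes them non-adjacent in $G'$.
\item If $r'=r$: adjacency in $G$ is decided by $R$ on the colours, with the direction given by the side, via \ref{itm:adj-left} or \ref{itm:adj-right}.
\item If $r'=r+1$: \ref{itm:adj-bot-left} gives non-adjacency in $G'$ when $w$ is to the right, and \ref{itm:adj-right} decides it by colour when $w$ is to the left.
\item If $r'=r-1$: symmetrically, \ref{itm:adj-bot-left} gives non-adjacency in $G'$ when $w$ is to the left, and \ref{itm:adj-left} decides it by colour when $w$ is to the right.
\end{itemize}
So, up to the rank-$q$ flip correction, the $(v,w)$ entry depends only on the classes of $v$ and $w$. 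On the $M$ side the class is (colour, row), and on the $L\uplus R$ side it is (colour, row, side left/right). There are at most $2qh$ classes, so this block has rank at most $2qh$.

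Summing the three contributions gives $\rk_G(M,L\uplus R)\le k := 2qh + 2q$, a constant depending only on $\CC$.

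The main obstacle is this last case analysis. One has to check that every far pair is covered by one of the clauses of \ref{itm:adjacency}, in particular pairs involving a top-row or boundary-column $w$, which is why we always apply the clause with $u:=v\in M$, the vertex not in the top row. One also has to track which clauses are stated in $G$ and which in $G'$.
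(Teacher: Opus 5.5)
Your proposal is correct and follows essentially the same route as the paper. You take the insulator from \Cref{lem:empty-top-insulator}, use consecutive empty-top columns as the parts, and control the adjacency between $M$ and $L\uplus R$ via \ref{itm:outside} and the same case split over \ref{itm:adjacency} applied with $u\in M$.

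The differences are minor. The paper uses only $m$ columns; no discarding is needed, since $M$ never contains $c_1$ or $c_m$. It also puts the columns right of $c_m$ into $P_m$, and reads adjacency directly in $G$, so there is no separate flip-correction term. Finally, it bounds the rank by counting at most $2^{qh}$ distinct neighbourhood patterns, whereas your block decomposition gives the better constant $2qh+2q$.
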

\end{samepage}

\begin{proof}
    Let $f$, $h$, $q$ be as given by \cref{lem:empty-top-insulator}. We show that $f$ and $k := 2^{qh}$ satisfy the conclusion.

    Consider $G$ and $X \subseteq V(G)$ as in the statement of the lemma.
    By \cref{lem:empty-top-insulator}, there is an insulator $\gc A = (A, \KK, F, R)$ of height $h$ and cost $q$ in $G$ that insulates a set $W \subseteq X$ and has the following property.
    \begin{property}
    There are consecutive column indices $1 \leq c_1 < c_2 < \ldots < c_m \leq n$, such that
    \[
        A[c_{1},h]=\cdots=A[c_{m},h]=\emptyset.
    \]
    (We assume, up to renaming, that $A$ is indexed by $(1,\ldots,n)$.)
    \end{property}
    
    \noindent
    We define a partition of $V(G)$ into $m$ parts by
    \begin{align*}
        P_1 &:= \bigl(V(G) \setminus A\bigr) \cup A[{\le}c_1, *], \\
        P_i &:= A[c_i, *] \quad \text{for } 2 \le i \le m-1, \\
        P_m &:= A[{\ge}c_m, *].
    \end{align*}
    By construction, each part contains an entire column of the insulator. As the insulator insulates $W \subseteq X$, this means each part intersects $X$, as desired.

    Fix $1 \le \alpha < \beta \le m$ and let $L, M, R$ be as in the statement.
    By the definition of the partition, $M = A[\{c_{\alpha+1},\ldots,c_{\beta-1}\},*]$.
    Since $c_{\alpha+1} > 1$ and $c_{\beta-1} < n$, the columns of $M$ are non-boundary.
    Since their top cells are empty, all vertices of $M$ lie in rows of height strictly less than $h$, so $M \subseteq \int(A)$.

    \begin{claim}\label{claim:homogeneous}
        For every vertex $v \in L \uplus R$, color $C \in \KK$, and row index $r \in [h]$, the vertex $v$ is homogeneous to $M \cap C \cap A[*,r]$ in $G$.
    \end{claim}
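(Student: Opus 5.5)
The plan is a case distinction on the position of $v$ relative to the grid. In every case I will show that adjacency in $G$ between $v$ and a vertex $u \in M \cap C \cap A[*,r]$ is determined by data that does not depend on $u$: the color $C$, the color $\KK(v)$, the row $r$, the row of $v$, and whether $v$ lies left or right of $M$. Throughout, fix $u \in M \cap C \cap A[*,r]$. As shown just before the claim, $u \in \int(A)$. Since the top cells of the columns of $M$ are empty, we also have $r < h$, so property \ref{itm:adjacency} of the insulator applies with this $u$. Let $u \in A[i,r]$, where $i \in \{c_{\alpha+1},\ldots,c_{\beta-1}\}$.

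\emph{Case 1: $v \notin A$.} By \ref{itm:outside} applied to the color class $C$, the vertex $v$ is homogeneous to $C \cap \int(A)$ in $G$. This set contains $M \cap C \cap A[*,r]$, so $v$ is homogeneous to the latter as well.

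\emph{Case 2: $v \in A$.} Then $v \in A[j,r']$ for some column $j$ and row $r'$. First I check that the columns are far apart. Because the indices $c_1<\dots<c_m$ are consecutive, $L \cap A$ consists of columns $j \le c_{\alpha-1} = c_\alpha - 1 < i-1$. Here $P_1$ also contains the columns $\le c_1$, but these satisfy the same bound. Symmetrically, $R \cap A$ consists of columns $j \ge c_{\beta}+1 > i+1$. So the columns of $u$ and $v$ are never close. Next I split according to $r'$ and the side of $v$, always writing $G' = G \oplus F$.
\begin{itemize}
\item If $|r - r'| > 1$, then \ref{itm:adj-different-rows} (valid since $u \in \int(A)$) says $u$ and $v$ are non-adjacent in $G'$. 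Hence $uv \in E(G)$ if and only if $(C,\KK(v)) \in F$.
\item If $v$ is to the left ($j < i-1$) and $r' \in \{r,r+1\}$, then \ref{itm:adj-right} gives $uv \in E(G)$ if and only if $(\KK(v),C) \in R$.
\item If $v$ is to the left and $r' = r-1$, then $v \in A[{<}i,r-1]$. By \ref{itm:adj-bot-left}, $u$ and $v$ are non-adjacent in $G'$, so adjacency in $G$ is again decided by $(C,\KK(v)) \in F$.
\item If $v$ is to the right ($j > i+1$) and $r' \in \{r,r-1\}$, then \ref{itm:adj-left} gives $uv \in E(G)$ if and only if $(C,\KK(v)) \in R$.
\item If $v$ is to the right and $r' = r+1$, then \ref{itm:adj-bot-left} again gives non-adjacency in $G'$.
\end{itemize}
In every subcase the answer depends only on $\KK(v)$, $C$, $r$, $r'$ and the side of $v$, and not on the particular $u$. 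This proves the homogeneity.

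The only delicate point is the bookkeeping in Case 2. One must make sure that $u$, rather than $v$, plays the role of the vertex ``$u$'' in \ref{itm:adjacency}, since only the vertex from $M$ is guaranteed to be interior and below the top row. One must also confirm that $L$ and $R$ never contain columns close to $M$, including the edge cases $\alpha = 1$, where $L = \emptyset$, and $\beta = m$, where $R = \emptyset$. Both points follow from the consecutiveness of $c_1,\ldots,c_m$ and the emptiness of their top cells.
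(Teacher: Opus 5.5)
Your proof is correct and follows the same route as the paper: the case $v\notin A$ goes through \ref{itm:outside} and $M \subseteq \int(A)$. The case $v \in A$ uses that $L$ and $R$ only contain columns at distance at least two from the columns of $M$, and then reads off adjacency from \ref{itm:adj-different-rows}, \ref{itm:adj-bot-left}, \ref{itm:adj-left} and \ref{itm:adj-right}, with the vertex of $M$ in the role of the interior vertex $u$. Your explicit handling of the right side spells out what the paper calls a symmetric argument: rows $r, r-1$ fall under \ref{itm:adj-left}, and row $r+1$ falls under \ref{itm:adj-bot-left}.
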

    \begin{claimproof}
        If $v \notin A$ is outside the grid, then the insulator property~\ref{itm:outside} says that $v$ is homogeneous to $C \cap \int(A)$ for each $C \in \KK$. Since $M \subseteq \int(A)$, the claim follows.

        If $v \in L \cap A$ is in the left side of the grid, then it is contained in some column $c_v \le c_\alpha - 1$ and row $s \in [h]$.
        Fix any two vertices $u, u' \in M \cap C \cap A[*,r]$.
        They share the same color $C$ and row $r < h$. Both lie in (possibly different) columns $c_u,c_{u'} \ge c_{\alpha}+1 \ge c_v + 2$ that are to the right of $c_v$ and non-consecutive with $c_v$.
        Applying property~\ref{itm:adjacency} to $u$ (and identically to $u'$):
        \begin{itemize}
            \item if $s \in \{r, r+1\}$, then  property~\ref{itm:adj-right} gives 
            \[
                uv \in E(G) \Leftrightarrow (\KK(v), C) \in R;    
            \]
            \item if $s = r-1$, then property~\ref{itm:adj-bot-left} gives $u,v$ non-adjacent in $G \oplus F$, hence 
            \[
                uv \in E(G) \Leftrightarrow (\KK(v), C) \in F;    
            \]
            \item if $|s-r| > 1$, then property~\ref{itm:adj-different-rows} (using $u \in \int(A)$) gives the same.
        \end{itemize}
        In each case $uv$ being an edge depends only on $C$, $\KK(v)$, and $s$; and the same holds for $u'v$. Thus,
        \[
            uv \in E(G)
            \Leftrightarrow
            u'v \in E(G)
        \]
        which proves the claim.

        If $v \in R \cap A$ is in the right side of the grid, then $v$ has column $c_v \ge  c_\beta + 1 \ge c_u + 2$ for any column $c_u$ of $M$.
        A symmetric argument using property~\ref{itm:adj-left} in place of~\ref{itm:adj-right} gives the same conclusion.
    \end{claimproof}

    By \cref{claim:homogeneous}, the neighborhood of any $v \in L \uplus R$ in $M$ is determined by the function
    $\KK \times [h] \to \{0,1\}$ recording for which $(C,r)$ the vertex $v$ is adjacent to $M \cap C \cap A[*,r]$.
    There are at most $2^{|\KK| \cdot h} \le 2^{qh} = k$ such functions.
    Since the rank of a $\{0,1\}$-matrix over the two element field is at most the number of distinct rows,
    $\rk_G(M, L \uplus R) \le k$.
    Since $1 \le \alpha < \beta \le m$ were arbitrary, the lemma follows.
\end{proof}

\end{document}